\author{Tirthankar Bhattacharyya and Abhay Jindal}
\address{Department of Mathematics\\
	Indian Institute of Science\\
	Bangalore 560012, India}
\email{tirtha@iisc.ac.in; abjayj@iisc.ac.in}
\newcommand{\bydef}{\stackrel{\rm def}{=}}
\newcommand{\cB}{{\mathcal B}}
\newcommand{\cD}{{\mathcal D}}
\newcommand{\cE}{{\mathcal E}}
\newcommand{\cH}{{\mathcal H}}
\newcommand{\cM}{{\mathcal M}}
\newcommand{\la}{\langle}
\newcommand{\ra}{\rangle}
\newcommand{\bfT}{\textit{\textbf{T}}}
\newcommand{\bfR}{\textit{\textbf{R}}}
\newcommand{\bfM}{\textit{\textbf{M}}}
\newcommand{\bfZ}{\textit{\textbf{Z}}}
\newtheorem{thm}{Theorem}[section]
\newtheorem{corollary}[thm]{Corollary}
\newtheorem{lemma}[thm]{Lemma}
\newtheorem{proposition}[thm]{Proposition}
\newtheorem{definition}[thm]{Definition}
\numberwithin{equation}{section}
\def\textmatrix#1&#2\\#3&#4\\{\bigl({#1 \atop #3}\ {#2 \atop #4}\bigr)}
\def\dispmatrix#1&#2\\#3&#4\\{\left({#1 \atop #3}\ {#2 \atop #4}\right)}
\numberwithin{equation}{section}
\def\textmatrix#1&#2\\#3&#4\\{\bigl({#1 \atop #3}\ {#2 \atop #4}\bigr)}
\def\dispmatrix#1&#2\\#3&#4\\{\left({#1 \atop #3}\ {#2 \atop #4}\right)}
\begin{document}
	
\title{Complete Nevanlinna-Pick kernels and the curvature invariant}
	
\maketitle
\begin{abstract}
We consider a unitarily invariant complete Nevanlinna-Pick kernel denoted by $s$ and a commuting $d$-tuple of bounded operators $\bfT = (T_{1}, \dots, T_{d})$ satisfying a natural contractivity condition with respect to $s$. We associate with $\bfT$ its curvature invariant which is a non-negative real number bounded above by the dimension of a defect space of $\bfT$. The instrument which makes this possible is the characteristic function developed in \cite{BJ}. 
	
\medskip
	
We present an asymptotic formula for the curvature invariant. In the special case when $\bfT$ is pure, we provide a notably simpler formula, revealing that in this instance, the curvature invariant is an integer. We further investigate its connection with an algebraic invariant known as fibre dimension. Moreover, we obtain a refined and simplified asymptotic formula for the curvature invariant of $\bfT$ specifically when its characteristic function is a polynomial.
\end{abstract}

{\footnotesize \noindent 2020 Mathematics Subject Classification: 47A13, 47A15, 46E22. \\
Keywords: Complete Nevanlinna-Pick kernels, Dirichlet kernel, Drury-Arveson kernel, Curvature invariant, Fibre dimension}

\section{Introduction}

A reproducing kernel $s$ on the open Euclidean unit ball 
$$\mathbb{B}_{d} = \{ \bm z = (z_{1}, \dots ,z_{d}) \in \mathbb{C}^{d}: \|\bm z \| \bydef (|z_{1}|^{2} + \dots + |z_{d}|^{2})^{1/2} <1\}$$
is said to be a complete  Nevanlinna-Pick (CNP) kernel if for any natural numbers $m,n,$ any $N$ points $\bm \lambda_{1}, \dots, \bm \lambda_{N}$ in $\mathbb{B}_{d}$ and any $m \times n$ matrices $W_{1}, \dots, W_{N},$ the condition that the $N \times N$ block matrix
$$\Big( (I - W_{i}W_{j}^{*}) s(\bm \lambda_{i}, \bm \lambda_{j}) \Big)_{i,j=1}^{N}$$
is positive semidefinite implies that there is a holomorphic function $\varphi: \mathbb{B}_{d} \to \mathbb{M}_{m \times n} (\mathbb{C})$ mapping $H_{s} \otimes \mathbb{C}^{n}$ into $H_{s} \otimes \mathbb{C}^{m}$ by multiplication and interpolating $\bm \lambda_{i}$ to $W_{i}, i=1, \dots, N.$ The relation to the classical Nevanlinna-Pick interpolation problem is not hard to imagine, hence the name.

\medskip
 
Consider a reproducing kernel 
\begin{equation}\label{unitarily invariant}
s: \mathbb{B}_{d} \times \mathbb{B}_{d} \to \mathbb{C}; \quad s(\bm z, \bm w) = \sum\limits_{n=0}^{\infty} a_{n} \la \bm z , \bm w \ra^{n}, \quad (\bm z ,\bm w \in \mathbb{B}_{d})
\end{equation}
with $a_{0} = 1$ and $a_{n} > 0$ for all $n \geq 1.$  It is clearly {\em unitarily invariant}, i.e.,
$$s(U \bm z, U \bm w) = s(\bm z, \bm w) \quad \text{for all $d\times d$ unitary matrices $U$}.$$
It is called irreducible if $s (\bm z, \bm w) \neq 0$ for all $ \bm z, \bm w \in \mathbb{B}_{d}$ and $s_{\bm w}$ and $s_{\bm \nu}$ are linearly independent if $\bm \nu \neq \bm w$ where $s_{\bm w}(\bm z) = s(\bm z, \bm w).$ 

\medskip

There is a characterization which we shall greatly use, viz., a reproducing kernel $s$ defined in \eqref{unitarily invariant} is irreducible and CNP  if and only if there is a sequence of non-negative real numbers $\{b_{n}\}_{n=1}^{\infty}$ such that
\begin{equation} \label{bn}
1 - \frac{1}{s(\bm z, \bm w)} = \sum\limits_{n=1}^{\infty} b_{n} \la \bm z, \bm w \ra^{n}, \quad (\bm z, \bm w \in \mathbb{B}_{d}).
\end{equation}
In this note, $s$ will always stand for a regular unitarily invariant CNP kernel defined below.
\begin{definition}\label{def}
A reproducing kernel $s: \mathbb{B}_{d} \times \mathbb{B}_{d} \to \mathbb{C}$ defined by $\eqref{unitarily invariant}$ is called a regular unitarily invariant CNP kernel if 
\begin{enumerate}
	\item it is an irreducible CNP kernel, or equivalently, there exists a sequence of non-negative real numbers $\{b_{n}\}_{n=1}^{\infty}$ satisfying \eqref{bn},
	\item  $\lim\limits_{n \rightarrow \infty} \frac{a_{n}}{a_{n+1}}  = 1,$ and
	\item $\sum\limits_{n=0}^{\infty} a_{n} = \infty$ (equivalently $ \sum\limits_{n=1}^{\infty} b_{n} =1$ or $k_{\bm z}(\bm z) \rightarrow \infty$ as $|\bm z| \rightarrow 1.$)
\end{enumerate}
The corresponding reproducing kernel Hilbert space is denoted by $H_{s}.$ 
\end{definition}

	 Denote by $\mathbb{Z}_{+}$ the set of all non-negative integers. Let $\alpha= (\alpha_{1}, \ldots ,\alpha_{d})\in\mathbb{Z}^{d}_{+}$ be a {\em multi-index}. Let $\bm{z}\in \mathbb{C}^{d}$. We need the following notations.
	$$ |\alpha|=\alpha_{1}+ \cdots +\alpha_{d}, \;\; \alpha !=\alpha_{1}! \ldots \alpha_{d}!, \;\; \binom{|\alpha|}{\alpha} = \frac{|\alpha|!}{\alpha_{1}!\dots\alpha_{d}!} \text{ and } \bm z^{\alpha} = z_{1}^{\alpha_{1}}  \ldots z_{d}^{\alpha_{d}}.$$
	To simplify notations, we define the coefficients $a_{\alpha}$ and $b_{\alpha}$ as follows:
	\begin{equation*}
		a_{\alpha} = \begin{cases}  a_{|\alpha|} \binom{|\alpha|}{\alpha}, & \alpha\in\mathbb{Z}^{d}_{+} \\ 0, & \alpha\in\mathbb{Z}^{d} \backslash \mathbb{Z}^{d}_{+} \end{cases},\quad  \text{and} \quad
		b_{\alpha}= b_{|\alpha|} \binom{|\alpha|}{\alpha},\quad  \alpha\in\mathbb{Z}^{d}_{+} \backslash \{0\}.
	\end{equation*}
We shall need, for each multi-index $\alpha\in\mathbb{Z}^{d}_{+} \backslash \{0\},$  the polynomial
\begin{equation} \label{psi} 
\psi_{\alpha} : \mathbb{B}_{d} \to \mathbb{C}; \quad \bm z \mapsto (b_{\alpha})^{1/2} \bm{z}^{\alpha}.\end{equation}

\medskip

We now turn to $d-$tuples of bounded operators $ \bfT = (T_{1}, \dots, T_{d})$. Any tuple of bounded operators in this note always consists of commuting operators. Set 
$$\bfT^{\alpha} =  T_{1}^{\alpha_{1}} \dots T_{d}^{\alpha_{d}}, \quad \alpha \in \mathbb{Z}^{d}_{+}.$$
The following definition inspired by the expression in \eqref{bn} is introduced in \cite{CH} and plays a vital role in our analysis. 

\begin{definition}
If the series $\sum\limits_{\alpha\in\mathbb{Z}^{d}_{+} \backslash \{0\}} b_{\alpha} \bfT^{\alpha}(\bfT^\alpha)^{*}$ converges in the strong operator topology to a contraction, then the $d-$tuple $\bfT$ is referred to as a $1/s$-contraction.
In this case, we denote the unique positive square root of the positive operator
$$ I-\sum\limits_{\alpha\in\mathbb{Z}^{d}_{+} \backslash \{0\}} b_{\alpha} \bfT^{\alpha}(\bfT^{\alpha})^{*}$$
by $\Delta_{\bfT}.$  We shall call $\Delta_{\bfT}$ the  defect operator.
\end{definition}

When $s$ is the Drury-Arveson kernel $1/(1 - \langle \bm z, \bm w \rangle)$, the defect operators $\Delta_{\bfT}$ is $(I - T_1T_1^* - \cdots - T_dT_d^*)^{1/2}$.

\medskip

The purpose of this note is to shed light on asymptotic formulae for the curvature invariant $ K_{\bfT}$ (to be defined in the context later) for an $1/s$-contraction $\bfT$. To highlight, we choose Theorem \ref{thm1}, where we give an asymptotic formula for $K_{\bfT}$.

\section{The curvature invariant}

One of the principal tools we shall use is the {\em characteristic function} developed in \cite{BJ}. We need to recall the construction. Let
 $$\tilde{\cH} \bydef \oplus_{\alpha\in\mathbb{Z}^{d}_{+} \backslash \{0\}} \cH,$$
 the infinite direct sum of the Hilbert space $\cH.$  Recall the $\psi_\alpha$ from \eqref{psi} and define the infinite operator tuple
	\begin{equation*}\label{def Z}
		\bfZ = (\psi_{\alpha}(z) I_{\cH})_{\alpha\in\mathbb{Z}^{d}_{+} \backslash \{0\}}.
	\end{equation*}
	The same notation $\bfZ$ also serves for the operator from $\tilde{\cH}$ to $\cH$ mapping $(h_{\alpha})_{\alpha\in\mathbb{Z}^{d}_{+} \backslash \{0\}}$ to $\sum\limits_{\alpha\in\mathbb{Z}^{d}_{+} \backslash \{0\}} (b_{\alpha})^{1/2} \bm{z}^{\alpha} h_{\alpha}.$
	Similarly,  $\tilde{\bfT}$ stands for the infinite operator tuple
	\begin{equation*}\label{def Ttilde}
		\tilde{\bfT}  =  (\psi_{\alpha}(\bfT))_{\alpha\in\mathbb{Z}^{d}_{+} \backslash \{0\}}
	\end{equation*}
	as well as the operator from $\tilde{\cH}$ to $\cH$ which maps $(h_{\alpha})_{\alpha\in\mathbb{Z}^{d}_{+} \backslash \{0\}}$ to $\sum\limits_{\alpha\in\mathbb{Z}^{d}_{+} \backslash \{0\}} (b_{\alpha})^{1/2} \bfT^{\alpha} h_{\alpha}.$ Note that
	$$\|\bfZ\|^{2} = \sum\limits_{\alpha\in\mathbb{Z}^{d}_{+} \backslash \{0\}} b_{\alpha} | \bm{z}^{\alpha}|^{2} = 1 - \frac{1}{s(\bm{z}, \bm{z})} < 1.$$
Moreover, $\tilde{\bfT}$ is a contraction if and only if $\bfT$ is a $1/s$-contraction.  So, $I_{\cH}-\bfZ \tilde{\bfT}^{*}$ is invertible.

\medskip

A straightforward computation shows that $\Delta_{\bfT}^{2} = I_{\cH} - \tilde{\bfT} \tilde{\bfT}^{*}.$ Let $D_{\tilde{\bfT}}$ be the unique positive square root of the positive operator $I_{\tilde{H}} - \tilde{\bfT}^{*} \tilde{\bfT},$ and let $\cD_{\tilde{\bfT}} = \overline{\rm Ran} D_{\tilde{\bfT}}.$ By equation (I.3.4) of \cite{NF} we obtain the identity
	\begin{equation}\label{defect}
		\tilde{\bfT}D_{\tilde{\bfT}}= \Delta_{\bfT} \tilde{\bfT}.
	\end{equation}
	
	\begin{definition}
		The characteristic function of a $1/s$-contraction $\bfT = (T_{1}, \dots, T_{d})$ is the analytic operator valued function $\theta_{\bfT}:\mathbb{B}_{d}\to\cB(\cD_{\tilde{\bfT}}, \overline{\rm Ran} \Delta_{\bfT})$  given by
		$$	\theta_{\bfT}(\bm{z})=(-\tilde{\bfT}+ \Delta_{\bfT}(I_{\cH}-\bfZ \tilde{\bfT}^{*})^{-1}\bfZ D_{\tilde{\bfT}})|_{\cD_{\tilde{\bfT}}}. $$
	\end{definition}
	
	The characteristic function $\theta_{\bfT}$ takes values in $\cB(\cD_{\tilde{\bfT}}, \overline{\rm Ran} \Delta_{\bfT})$ by virtue of \eqref{defect}. It generalizes the characteristic function developed in \cite{BES} for  the special case when $s$ is the Drury-Arveson kernel. For Hilbert spaces $\cE$ and $\mathcal F$, let ${\rm Mult}(H_{s} \otimes \cE, H_{s} \otimes \mathcal F )$ denote the {\em multiplier space}, i.e., the vector space of all $\mathcal B (\cE, \mathcal F)$ valued functions $\varphi$ on $\mathbb B_d$ such that the multiplication operator $M_\varphi$ is in $\mathcal B (H_{s} \otimes \cE, H_{s} \otimes \mathcal F )$. The characteristic function  $\theta_{\bfT}$ is  in ${\rm Mult}(H_{s} \otimes \cD_{\tilde{\bfT}}, H_{s} \otimes \overline{\rm Ran} \Delta_{\bfT} )$ and is contractive because of Theorem 4.11 in \cite{BJ}.
	
\medskip

Let  $\sigma$ denote the normalized measure on the sphere $\partial \mathbb{B}_{d}$. Since $\theta_{\bfT}$ is a bounded analytic  operator-valued function defined in the open unit ball $\mathbb{B}_{d},$ for $\sigma-$almost every $\bm{z} \in \partial \mathbb{B}_{d},$ the radial limits
	\begin{equation*}
		\tilde{\theta_{\bfT}} (\bm{z}) = \lim\limits_{r \uparrow 1} \theta_{\bfT}(r \bm{z}), \quad  {\rm and} \quad \tilde{\theta_{\bfT}}(\bm{z})^{*} = \lim\limits_{r \uparrow 1} \theta_{\bfT}(r \bm{z})^{*}
	\end{equation*}
	exist as strong limits of operators.

\medskip

\textbf {Assumption:} For this note, $\rm dim ({\rm Ran} \Delta_{\bfT})$ is finite.

\begin{definition}
 The curvature function is defined for almost everywhere  $\bm{z} \in \partial \mathbb{B}_{d}$ by
$$ 	K_{\bfT}(\bm{z}) =  \rm dim ({\rm Ran} \Delta_{\bfT}) - {\rm trace}\left( \tilde{\theta_{\bfT}}(\bm{z}) \tilde{\theta_{\bfT}} (\bm{z})^{*}  \right) .$$
The curvature invariant of $\bfT$ is defined by averaging $K_{\bfT}(\cdot)$ over the sphere
$$	K_{\bfT} = \int_{\partial \mathbb{B}_{d}} K_{\bfT}(\bm{z}) \ d \sigma(\bm{z}).$$
\end{definition}

This definition generalizes the curvature invariant introduced in \cite{curv}, see the comment at the end of page 31 of \cite{BES}. Clearly, $0 \le K_{\bfT} \le \rm dim ({\rm Ran} \Delta_{\bfT})$. In the case when $\bfT$ is pure, an appeal to Theorem 4.11 of \cite{BJ} tells us that the  extreme case  $K_{\bfT} = \rm dim ({\rm Ran} \Delta_{\bfT})$ occurs if and only if  $\bfT$ is unitarily equivalent to $(M_{z_1} \otimes I,  \ldots , M_{z_d} \otimes I)$ on $H_s \otimes {\rm Ran} \Delta_{\bfT}$.

\medskip

If two $1/s$-contractions $\bfT$ and $\bfR$ are componentwise intertwined by a unitary, then  the corresponding characteristic functions {\em coincide} in the sense that $\tilde{\theta_{\bfT}} (\bm{z}) = U \tilde{\theta_{\bfR}} (\bm{z})V$ for appropriate unitaries, see \cite{BJ}. Thus, $K_{\bfT}$ is a unitary invariant. In fact, $K_T =- \rm index (T)$ when $d=1$, the kernel $s$ is the Szego kernel and $T$ is a pure contraction ($\|T\| \leq 1$ and $T^{*n} \rightarrow 0 $ strongly as $n \rightarrow \infty$), see \cite{Parrott}. Parrott actually proves in \cite{Parrott} that for a pure contraction, $K_T =\rm dim ({\rm Ran} \Delta_T) -\rm dim (\mathcal D_T)$. While Parrott has a long proof, the proof is immediate using the characteristic function (our characteristic function identifies with the classical one due to Sz.-Nagy and Foias in this case). Indeed,  $\theta_T$ is an inner function implying that $\theta_T(z) \theta_T(z)^*$ is the projection onto  $\mathcal D_T$ almost everywhere  $\bm{z} \in \mathbb T$. That finishes the proof.

\section{The asymptotic formula for multipliers}

An asymptotic formula for the curvature invariant can be obtained from an asymptotic formula for $\int_{\partial \mathbb{B}_{d}} {\rm trace}( \tilde{\theta_{\bfT}}(\bm{z}) \tilde{\theta_{\bfT}} (\bm{z})^{*}  )\ d \sigma(\bm{z})$. The latter will be achieved through a trace class operator defined below in \eqref{d Gamma}.

\medskip
	
Let $H^{2}(\partial \mathbb{B}_{d})$ be the Hardy space of the unit ball $\mathbb{B}_{d}.$ Monomials form an orthogonal basis for $H^{2}(\partial \mathbb{B}_{d}).$ Moreover, for any $\alpha\in\mathbb{Z}^{d}_{+},$
$$\|z^{\alpha}\|^{2}_{H^{2}(\partial \mathbb{B}_{d})} = \frac{1}{\binom{|\alpha| +d-1}{d-1}} \frac{1}{\binom{|\alpha|}{\alpha}} = \frac{1}{q_{d-1}(|\alpha|)} \frac{1}{\binom{|\alpha|}{\alpha}}$$
where $q_{m}(n) = \binom{m+n}{m}$ for any two non-negative integers $m$ and $n$ with the convention that $\binom{0}{0} = 1.$ Since $a_{n} \leq 1 \leq q_{d-1}(n)$ for all $n \geq 0,$ it follows that any unitarily invariant CNP space $H_{s}$ is contractively contained in $H^{2}(\partial \mathbb{B}_{d}).$ Let
$$\delta: H_{s} \otimes {\rm Ran}\Delta_{\bfT} \to H^{2}(\partial \mathbb{B}_{d}) \otimes {\rm Ran}\Delta_{\bfT} $$
be the inclusion map. Let $E_{n}$ and $\tilde{E_{n}}$ be the projections of $H_{s} \otimes {\rm Ran}\Delta_{\bfT}$ and $H^{2}(\partial \mathbb{B}_{d}) \otimes {\rm Ran}\Delta_{\bfT}$ respectiely, onto the subspaces of vector-valued homogenous polynomials of degree $n.$ Let $P_{n} = \sum\limits_{i=0}^{n} E_{i}$ and $\tilde{P_{n}} = \sum\limits_{i=0}^{n} \tilde{E_{i}}.$ The inclusion map has the following properties.
	
	\begin{lemma}\label{delta}
	The inclusion map $\delta: H_{s} \otimes {\rm Ran} \Delta_{\bfT} \to H^{2}(\partial \mathbb{B}_{d}) \otimes {\rm Ran} \Delta_{\bfT}$ has the following properties:
	\begin{enumerate}
	\item $\delta E_{n} = \tilde{E_{n}} \delta, \quad n=0,1,2,\dots,$
	\item $\delta (M_{z_{i}} \otimes I) = (M_{z_{i}} \otimes I) \delta, \quad i=1,\dots,d,$ and
	\item $ \delta^{*} \delta = \sum\limits_{n=0}^{\infty} \frac{a_{n}}{q_{d-1}(n)} E_{n}.$
	\end{enumerate}
	\end{lemma}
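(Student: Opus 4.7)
The plan is to exploit the fact that for any unitarily invariant reproducing kernel the monomials $\{\bm z^\alpha : \alpha \in \mathbb Z_+^d\}$ form an orthogonal basis. In particular $\{\bm z^\alpha\}$ is simultaneously an orthogonal basis of $H_s$ and of $H^2(\partial \mathbb B_d)$, and $\delta$ acts as the identity on the dense polynomial subspace, merely changing the ambient norm. Everything will then reduce to bookkeeping on monomials and be extended to the whole space by density and continuity, using the contractive containment $H_s \subset H^2(\partial \mathbb B_d)$ noted just before the lemma.

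For (1), I would observe that vector-valued homogeneous polynomials of degree $n$ form a common finite-dimensional subspace of $H_s \otimes {\rm Ran}\Delta_{\bfT}$ and of $H^2(\partial \mathbb B_d) \otimes {\rm Ran}\Delta_{\bfT}$. Since monomials of distinct multi-degrees are pairwise orthogonal in each ambient space, $E_n$ and $\tilde E_n$ are the orthogonal projections onto exactly this common subspace sitting inside the two different Hilbert spaces. Because $\delta$ preserves the degree of every monomial, $\delta E_n = \tilde E_n \delta$ follows immediately.

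For (2), on every polynomial $p \otimes v$ both $\delta(M_{z_i} \otimes I)$ and $(M_{z_i} \otimes I)\delta$ return $z_i p \otimes v$ and thus agree on the dense polynomial subspace. Boundedness of $M_{z_i}$ on each of $H_s$ (guaranteed by $\lim_{n \to \infty} a_n/a_{n+1} = 1$) and on $H^2(\partial \mathbb B_d)$ (standard), together with density of polynomials, lets me extend the equality to the entire space by continuity.

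For (3), I would diagonalize $\delta^{*}\delta$ on the orthogonal basis $\{\bm z^\alpha \otimes e_j\}$, where $\{e_j\}$ is any orthonormal basis of the finite-dimensional space ${\rm Ran}\Delta_{\bfT}$. Writing $s(\bm z, \bm w) = \sum_\alpha a_\alpha \bm z^\alpha \overline{\bm w}^\alpha$ yields $\|\bm z^\alpha\|^2_{H_s} = 1/a_\alpha$, while $\|\bm z^\alpha\|^2_{H^2(\partial \mathbb B_d)} = 1/(q_{d-1}(|\alpha|)\binom{|\alpha|}{\alpha})$ is the formula recorded just above the lemma. Comparing $\langle \delta^{*}\delta(\bm z^\alpha \otimes v),\, \bm z^\alpha \otimes v \rangle_{H_s \otimes {\rm Ran}\Delta_{\bfT}}$ with $\|\delta(\bm z^\alpha \otimes v)\|^{2}_{H^2(\partial \mathbb B_d) \otimes {\rm Ran}\Delta_{\bfT}}$ forces the eigenvalue of $\delta^{*}\delta$ on $\bm z^\alpha \otimes v$ to be $a_{|\alpha|}/q_{d-1}(|\alpha|)$, a quantity depending only on $|\alpha|$. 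Grouping eigenspaces by total degree then yields the claimed expansion $\delta^{*}\delta = \sum_{n=0}^{\infty} (a_n/q_{d-1}(n)) E_n$. No step here is substantively hard; the only real care required throughout is to distinguish the two different norms on the same underlying polynomial.
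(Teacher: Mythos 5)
Your proposal is correct and follows essentially the same route as the paper: parts (1) and (2) are immediate from the fact that $\delta$ is the identity on polynomials (the paper simply says they ``follow directly from the definition''), and part (3) is exactly the paper's computation, comparing $\la \delta^{*}\delta(\bm z^{\alpha}\otimes v), \bm z^{\alpha}\otimes v\ra_{H_{s}\otimes {\rm Ran}\Delta_{\bfT}}$ with $\|\bm z^{\alpha}\otimes v\|^{2}_{H^{2}(\partial\mathbb{B}_{d})\otimes {\rm Ran}\Delta_{\bfT}}$ via the two norm formulas for monomials. The only point worth making explicit (implicit in both your argument and the paper's) is that monomials of distinct multi-indices remain orthogonal after applying $\delta$, so the diagonal computation really does determine $\delta^{*}\delta$.
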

	\begin{proof}
	$(1)$ and $(2)$ follow directly from the definition of $\delta.$ To prove $(3),$ fix $\alpha \in \mathbb{Z}^{d}_{+}$ and $h \in {\rm Ran}\Delta_{\bfT}$ and consider
	\begin{align*}
	\left\la \delta^{*} \delta ( \bm{z}^{\alpha} \otimes h), (\bm{z}^{\alpha} \otimes h) \right\ra_{H_{s} \otimes {\rm Ran} \Delta_{\bfT}}
	& = \left\la \delta (\bm{z}^{\alpha} \otimes h), \delta ( \bm{z}^{\alpha} \otimes h) \right\ra_{H^{2}(\partial \mathbb{B}_{d}) \otimes {\rm Ran} \Delta_{\bfT}} \\
	& = \left\| (\bm{z}^{\alpha} \otimes h ) \right\|^{2}_{H^{2}(\partial \mathbb{B}_{d}) \otimes {\rm Ran} \Delta_{\bfT}}\\
	& = \frac{\| h \|^{2}}{q_{d-1}(\alpha) \binom{|\alpha|}{\alpha}} \\
	& = \frac{a_{|\alpha|}}{q_{d-1}(|\alpha|)} \left\| ( \bm{z}^{\alpha} \otimes h)\right\|^{2}_{H_{s} \otimes {\rm Ran} \Delta_{\bfT}}\\
	& =  \frac{a_{|\alpha|}}{q_{d-1}(|\alpha|)} \left\la  (\bm{z}^{\alpha} \otimes h), ( \bm{z}^{\alpha} \otimes h)  \right\ra.
	\end{align*}
	This proves the lemma.
	\end{proof}
	
	Let  $ \Phi: \cB(H_{s} \otimes{\rm Ran} \Delta_{\bfT}) \to \cB(H_{s} \otimes {\rm Ran} \Delta_{\bfT})$ be the completely positive map defined as
	$$\Phi (X) = \sum_{\alpha\in\mathbb{Z}^{d}_{+} \backslash \{0\}} b_{\alpha} (\bfM_{\bm{z}}^{\alpha}  \otimes I) X (\bfM_{\bm{z}}^{\alpha} \otimes I)^{*}.$$
	Convergence of the series above follows from the fact that the canonical $d-$tuple $$\bfM_{\bm{z}} \bydef (M_{z_{1}}, \dots, M_{z_{d}})$$ on $H_{s}$ is a $1/s-$contraction. Let $\Psi: \cB (H_{s} \otimes {\rm Ran} \Delta_{\bfT}) \to \cB(H^{2}(\partial \mathbb{B}_{d}) \otimes {\rm Ran} \Delta_{\bfT})$ be another completely positive map given by
	$$  \Psi(X) = \delta X \delta^{*}.$$
	We define a linear map $d\Psi : \cB (H_{s} \otimes {\rm Ran} \Delta_{\bfT}) \to \cB(H^{2}(\partial \mathbb{B}_{d}) \otimes {\rm Ran} \Delta_{\bfT})$ as follows
	$$d \Psi (X) = \Psi(X) - \sum\limits_{\alpha\in\mathbb{Z}^{d}_{+} \backslash \{0\}} b_{\alpha} (\bfM_{\bm z}^{\alpha} \otimes I) \Psi(X) (\bfM_{\bm z}^{\alpha} \otimes I)^{*}.$$
	In the setting of Drury-Arveson kernel, Arveson viewed the map $d \Psi$ as ``differential" of the map $\Psi.$ Note that the map $d \Psi$ can equivalently be defined as
	\begin{equation}\label{d Gamma}
	d \Psi (X) = \delta (X - \Phi(X)) \delta^{*}.
	\end{equation}
    We find it convenient to work with this definition of $d \Psi.$ For all $n=0,1,2,\dots$ and $i=0,\dots,n,$ we set the following notation:
	\begin{equation}\label{weights}
	 w_{i,n} = \begin{cases} a_{i} \left( 1 - \sum\limits_{j=1}^{n-i} b_{j}\right), & 0 \leq i \leq n-1 \\ a_{n}, & i =n.  \end{cases} 
	 \end{equation}
If $\varphi \in {\rm Mult}(H_{s} \otimes \cE, H_{s} \otimes {\rm Ran} \Delta_{\bfT})$ is a  multiplier, then it is an operator-valued bounded analytic function. Thus, the radial limits
$$	\tilde{\varphi} (\bm{z}) = \lim\limits_{r \uparrow 1} \varphi(r \bm{z}), \quad  {\rm and} \quad \tilde{\varphi}(\bm{z})^{*} = \lim\limits_{r \uparrow 1} \varphi (r \bm{z})^{*} $$
exist as strong limits of operators for $\sigma-$almost every $\bm{z} \in \partial \mathbb{B}_{d}.$  We are ready to state the first asymptotic formula of the paper.

    \begin{thm}
  \label{formulae}
    Let $\varphi \in {\rm Mult}(H_{s} \otimes \cE, H_{s} \otimes {\rm Ran} \Delta_{\bfT})$ be a  multiplier. Then, the linear operator $d \Psi (M_{\varphi} M_{\varphi}^{*})$ is in trace class. Moreover,
$${\rm trace} (d \Psi (M_{\varphi} M_{\varphi}^{*})) = \int_{\partial \mathbb{B}_{d}} {\rm trace} \left( \tilde{\varphi}(\bm{z}) \tilde{\varphi}(\bm{z})^{*}\right) d \sigma(\bm{z})= \lim\limits_{n \rightarrow \infty} \sum\limits_{i=0}^{n} w_{i,n} \frac{{\rm trace} (M_{\varphi} M_{\varphi}^{*} E_{i})}{q_{d-1}(i)}.$$
 \end{thm}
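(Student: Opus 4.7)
The plan has two main pieces: first, a factorization of $d\Psi(M_\varphi M_\varphi^*)$ that directly yields trace-class membership and the integral formula; second, a Taylor expansion of $\varphi$ combined with an Abel-type resummation to extract the weighted-sum formula. I begin with the algebraic identity
\[
M_\varphi M_\varphi^* - \Phi(M_\varphi M_\varphi^*) = M_\varphi\,(P \otimes I_\cE)\, M_\varphi^*,
\]
where $P$ is the projection of $H_s$ onto the constants. This uses (i) the intertwining $M_\varphi(\bfM_{\bm z}^\alpha \otimes I_\cE) = (\bfM_{\bm z}^\alpha \otimes I_{{\rm Ran}\,\Delta_\bfT}) M_\varphi$ for each $\alpha$, and (ii) the model identity $\sum_\alpha b_\alpha \bfM_{\bm z}^\alpha(\bfM_{\bm z}^\alpha)^* = I_{H_s} - P$ (verified on reproducing kernels using the expansion of $1-1/s$). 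Inserting into $d\Psi(X) = \delta(X-\Phi(X))\delta^*$ factorizes $d\Psi(M_\varphi M_\varphi^*) = BB^*$ with $B := \delta M_\varphi(P \otimes I_\cE)\colon H_s \otimes \cE \to H^2(\partial \mathbb B_d) \otimes {\rm Ran}\,\Delta_\bfT$. Choosing an orthonormal basis of $H_s \otimes \cE$ extending $\{1 \otimes e_k\}$ (for $\{e_k\}$ an orthonormal basis of $\cE$), all other basis vectors are annihilated by $B$, and
\[
{\rm trace}(BB^*) = \sum_k \|\delta M_\varphi(1 \otimes e_k)\|^2 = \sum_k \int_{\partial \mathbb B_d}\|\tilde\varphi(\bm z)e_k\|^2 d\sigma(\bm z) = \int_{\partial \mathbb B_d}{\rm trace}(\tilde\varphi\tilde\varphi^*)\,d\sigma,
\]
which is finite because ${\rm trace}(\tilde\varphi\tilde\varphi^*) \le \dim({\rm Ran}\,\Delta_\bfT)\cdot \|\varphi\|_{\rm Mult}^2$ a.e.\ and $\dim({\rm Ran}\,\Delta_\bfT) < \infty$. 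This gives trace-class membership and the first equality simultaneously.

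For the second equality, expand $\varphi(\bm z) = \sum_\alpha A_\alpha \bm z^\alpha$. Since each $E_i$ is finite rank, a direct computation on the monomial orthonormal basis $\{\sqrt{a_\beta}\,\bm z^\beta \otimes e_k\}$ of $H_s \otimes \cE$ gives
\[
{\rm trace}(M_\varphi M_\varphi^* E_i) = \sum_{|\eta|\le i}{\rm trace}(A_\eta A_\eta^*)\sum_{|\beta|=i-|\eta|}\frac{a_\beta}{a_{\beta+\eta}},
\]
and a multinomial/generating-function computation (via $\sum_{\beta_l \ge 0}\frac{(\beta_l+\eta_l)!}{\beta_l!} x^{\beta_l} = \eta_l!/(1-x)^{\eta_l+1}$) evaluates the inner sum as $\frac{a_{i-|\eta|}}{a_i}\cdot\frac{q_{d-1}(i)}{q_{d-1}(|\eta|)\binom{|\eta|}{\eta}}$. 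Substituting into the weighted sum and interchanging the two non-negative finite sums yields
\[
\sum_{i=0}^n w_{i,n}\frac{{\rm trace}(M_\varphi M_\varphi^* E_i)}{q_{d-1}(i)} = \sum_{|\eta|\le n}\frac{{\rm trace}(A_\eta A_\eta^*)}{q_{d-1}(|\eta|)\binom{|\eta|}{\eta}}\cdot \sum_{i=|\eta|}^n\frac{w_{i,n}\,a_{i-|\eta|}}{a_i}.
\]

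The hard part is to show the inner sum is exactly $1$. The substitution $k = i - |\eta|$ collapses it to $\sum_{k=0}^{n-|\eta|} w_{k,\,n-|\eta|}$, so the problem reduces to proving $\sum_{i=0}^N w_{i,N} = 1$ for every $N \ge 0$. This follows by a direct double-sum manipulation from the Cauchy-product recursion $\sum_{i=0}^{N-1}a_i b_{N-i} = a_N$ (obtained by comparing coefficients in $s \cdot (1 - 1/s) = s - 1$), together with the regularity condition $\sum_n b_n = 1$. Once this inner sum equals $1$, monotone convergence (everything is non-negative) together with the Parseval-type identity $\|\bm z^\alpha\|_{H^2}^2 = 1/(q_{d-1}(|\alpha|)\binom{|\alpha|}{\alpha})$ on the sphere gives
\[
\sum_{|\eta|\le n}\frac{{\rm trace}(A_\eta A_\eta^*)}{q_{d-1}(|\eta|)\binom{|\eta|}{\eta}}\;\longrightarrow\; \sum_\eta \frac{{\rm trace}(A_\eta A_\eta^*)}{q_{d-1}(|\eta|)\binom{|\eta|}{\eta}} = \int_{\partial \mathbb B_d}{\rm trace}(\tilde\varphi\tilde\varphi^*)\,d\sigma,
\]
matching ${\rm trace}(d\Psi(M_\varphi M_\varphi^*))$ from the first step and completing the proof.
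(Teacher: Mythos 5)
Your proposal is correct. The first half (trace-class membership and the integral formula) is essentially the paper's own argument: the factorization $M_{\varphi}M_{\varphi}^{*}-\Phi(M_{\varphi}M_{\varphi}^{*})=M_{\varphi}(E_{0}\otimes I_{\cE})M_{\varphi}^{*}$, the resulting $d\Psi(M_{\varphi}M_{\varphi}^{*})=BB^{*}$ with $B=\delta M_{\varphi}(E_0\otimes I_{\cE})$, and the Parseval/boundary-value computation of ${\rm trace}(B^{*}B)$ coincide with the paper's operator $A\eta=\delta(M_{\varphi}(1\otimes\eta))$. For the second equality, however, you take a genuinely different route. The paper proves a general identity ${\rm trace}(d\Psi(X)\tilde{P_{n}})=\sum_{i=0}^{n}w_{i,n}\,{\rm trace}(XE_{i})/q_{d-1}(i)$ valid for every bounded $X$ (Proposition \ref{P_n}, resting on Lemma \ref{Id2}) and then lets $\tilde{P_{n}}\uparrow I$ against the positive trace-class operator $d\Psi(M_{\varphi}M_{\varphi}^{*})$. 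You never touch $\tilde{P_{n}}$: you compute ${\rm trace}(M_{\varphi}M_{\varphi}^{*}E_{i})$ from the Taylor coefficients $A_{\eta}$ (this is the paper's Corollary \ref{trace XE_n}, which it only deploys later, in Section 4), interchange the finite non-negative sums, and observe the exact resummation $\sum_{i=|\eta|}^{n}w_{i,n}\,a_{i-|\eta|}/a_{i}=\sum_{k=0}^{n-|\eta|}w_{k,\,n-|\eta|}=1$, which reduces the weighted sum to the partial sum $\sum_{|\eta|\le n}{\rm trace}(A_{\eta}A_{\eta}^{*})/(q_{d-1}(|\eta|)\binom{|\eta|}{\eta})$ of the series representing the integral. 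Both routes ultimately rest on the same combinatorial identity of Lemma \ref{Id2}, and your weight identity $\sum_{i=0}^{N}w_{i,N}=1$ is exactly the paper's Lemma \ref{w} (for which the Cauchy-product relation $\sum_{j=1}^{N}a_{N-j}b_{j}=a_{N}$ suffices; the condition $\sum_{n}b_{n}=1$ you invoke is not actually needed there). What the paper's approach buys is a formula for ${\rm trace}(d\Psi(X)\tilde P_n)$ for arbitrary $X$ and a softer limiting argument; what yours buys is an explicit closed form for each weighted partial sum, making the convergence and the identification of the limit completely elementary, at the cost of being tied to operators of the form $M_{\varphi}M_{\varphi}^{*}$.
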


 Through several intermediate results, the proof of Theorem \ref{formulae} will be achieved at the end of this section. Firstly, we establish that the array of numbers $\{w_{i,n}; 0 \leq i \leq n\}_{n\geq 0}$ defined in \eqref{weights} possesses desirable properties.
	\begin{lemma}\label{w}
		For any $n\geq 0$ we have $\sum\limits_{i=0}^{n} w_{i,n} =1.$ Moreover, for any $k \geq 0,$ the sequence $\left\{ \sum\limits_{i=0}^{k} w_{i,n} \right\}_{n\geq k}$ converges to $0$ as $n\rightarrow \infty.$ Consequently, $\sum\limits_{i=0}^{n} w_{i,n} r_{i} \rightarrow L$ as $n \rightarrow \infty$ for any sequence $\{r_{n}\}_{n=0}^{\infty}$ of non-negative numbers converging to $L.$
	\end{lemma}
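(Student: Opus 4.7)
The plan has three pieces matching the three assertions of the lemma.

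For the first identity $\sum_{i=0}^n w_{i,n}=1$, I would exploit the functional relation between the sequences $\{a_n\}$ and $\{b_n\}$. Writing $s(\bm z,\bm w)=\sum a_n\la \bm z,\bm w\ra^n$ and using \eqref{bn} gives, at the level of formal power series, the identity $\big(\sum_{n\ge 0}a_n t^n\big)\big(1-\sum_{n\ge 1}b_n t^n\big)=1$. Equating coefficients yields $a_0=1$ and the recurrence $a_n=\sum_{j=1}^n b_j\,a_{n-j}$ for $n\ge 1$. Set $A_n=\sum_{i=0}^n a_i$. Summing the recurrence and swapping the order of summation (a short computation) gives $A_n=1+\sum_{j=1}^n b_j A_{n-j}$. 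On the other hand, writing $w_{i,n}=a_i\big(1-\sum_{j=1}^{n-i}b_j\big)$ uniformly for $0\le i\le n$ (with the convention that the empty sum is $0$) and swapping orders of summation gives
\[
\sum_{i=0}^n w_{i,n}=A_n-\sum_{j=1}^n b_j A_{n-j},
\]
which equals $1$ by the previous identity.

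For the second assertion, I would use $\sum_{j=1}^\infty b_j=1$ (condition (3) of Definition~\ref{def}). For a fixed $k$ and any $n>k$,
\[
\sum_{i=0}^k w_{i,n}=\sum_{i=0}^k a_i\Big(1-\sum_{j=1}^{n-i}b_j\Big).
\]
Each of the finitely many summands tends to $0$ as $n\to\infty$ since $n-i\to\infty$, so the whole sum tends to $0$.

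The third assertion is a standard Silverman--Toeplitz type argument. The array $\{w_{i,n}\}$ is non-negative, has row sums equal to $1$ (part 1), and each column tends to $0$ (part 2). Given $\varepsilon>0$, choose $N$ with $|r_i-L|<\varepsilon$ for $i\ge N$; boundedness of $\{r_i\}$ (it converges) gives $C:=\sup_i|r_i-L|<\infty$. Using $L=L\sum_{i=0}^n w_{i,n}$ write
\[
\Big|\sum_{i=0}^n w_{i,n}r_i-L\Big|\le C\sum_{i=0}^{N-1}w_{i,n}+\varepsilon\sum_{i=N}^n w_{i,n}\le C\sum_{i=0}^{N-1}w_{i,n}+\varepsilon,
\]
and the first term is less than $\varepsilon$ for $n$ large by part~2.

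I do not anticipate a real obstacle; the only slightly delicate point is the index bookkeeping in part~1 to justify $\sum_{i=0}^{n-1}a_i\sum_{j=1}^{n-i}b_j=\sum_{j=1}^{n}b_j\sum_{i=0}^{n-j}a_i$ and the parallel manipulation for $A_n$. Everything else is the classical regularity of a row-stochastic summability method whose columns die off.
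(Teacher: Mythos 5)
Your proof is correct and follows essentially the same route as the paper: both rest on the convolution recurrence $a_n=\sum_{j=1}^{n}b_j a_{n-j}$ obtained from $\bigl(\sum a_nt^n\bigr)\bigl(1-\sum b_nt^n\bigr)=1$, an interchange of the order of summation for part (1), the finiteness of the sum together with $\sum_j b_j=1$ for part (2), and the standard regular-summability argument for part (3) (which the paper states without proof). Your reformulation via the partial sums $A_n$ is only a cosmetic repackaging of the paper's direct termwise cancellation.
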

	\begin{proof}
		First we shall prove that $\sum\limits_{i=0}^{n} w_{i,n} =1 $ for any $n \geq 0.$ This is clearly true for $n=0.$ For $n \geq 1$ consider
		\begin{align*}
			\sum\limits_{i=0}^{n} w_{i,n} & = a_{n} + \sum\limits_{i=0}^{n-1} a_{i} \left( 1 - \sum\limits_{j=1}^{n-i} b_{j}\right) = \sum\limits_{i=0}^{n} a_{i} - \sum\limits_{i=0}^{n-1} \sum\limits_{j=1}^{n-i} a_{i}b_{j} \\
			& = \sum\limits_{i=0}^{n} a_{i} - \sum\limits_{i=1}^{n} \sum\limits_{j=1}^{i} a_{i-j} b_{j} = \sum\limits_{i=0}^{n} a_{i} - \sum\limits_{i=1}^{n} a_{i} = a_{0} = 1.
		\end{align*}
		This completes the proof of the first part. To prove the next part, let $k \geq 0$ be fixed. For $n >k$ consider
		\begin{align*}
			\sum\limits_{i=0}^{k} w_{i,n} & = \sum\limits_{i=0}^{k}a_{i} \left( 1 - \sum\limits_{j=1}^{n-i} b_{j}\right) =  \sum\limits_{i=0}^{k} \sum\limits_{j > n-i} a_{i} b_{j} \leq \sum\limits_{i=0}^{k} a_{i} \sum\limits_{j>n-k}b_{j}.
		\end{align*}
		Since $k$ is fixed and the series $\sum\limits_{j\geq 1} b_{j}$ converges, it follows that $\sum\limits_{i=0}^{k} w_{i,n} \rightarrow 0$ as $n \rightarrow \infty.$
	\end{proof}


    The following combinatorial identity can be found in equation $(2.6)$ in \cite{Cheng}.
    \begin{lemma} \label{Id2}
    For any $\beta \in \mathbb{Z}^{d}_{+}$ with $|\beta| \leq n$ the following identity holds
    $$ \sum\limits_{|\alpha| = n-|\beta|} \frac{\binom{|\alpha|}{\alpha} \binom{|\beta|}{\beta}}{\binom{|\alpha + \beta| }{\alpha +\beta}} = \frac{q_{d-1}(n)}{q_{d-1}(|\beta|)}.$$
    \end{lemma}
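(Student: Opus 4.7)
The plan is to reformulate the identity by simplifying the ratio of multinomial coefficients and then to evaluate the resulting sum by a generating-function argument. Set $k = |\beta|$; for $|\alpha| = n-k$ we have $|\alpha+\beta|=n$, and unpacking the multinomial coefficients as $\binom{|\gamma|}{\gamma} = |\gamma|!/(\gamma_1!\cdots\gamma_d!)$ yields
$$\frac{\binom{|\alpha|}{\alpha}\binom{|\beta|}{\beta}}{\binom{|\alpha+\beta|}{\alpha+\beta}} \;=\; \frac{(n-k)!\,k!}{n!}\,\prod_{i=1}^{d}\frac{(\alpha_i+\beta_i)!}{\alpha_i!\,\beta_i!} \;=\; \binom{n}{k}^{-1}\prod_{i=1}^{d}\binom{\alpha_i+\beta_i}{\alpha_i}.$$
Thus the identity reduces to showing
$$\sum_{|\alpha|=n-k}\prod_{i=1}^{d}\binom{\alpha_i+\beta_i}{\alpha_i} \;=\; \binom{n}{k}\,\frac{q_{d-1}(n)}{q_{d-1}(k)}.$$

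For the sum, I would apply the single-variable negative-binomial series $\sum_{m\geq 0}\binom{m+\beta_i}{m}x^m = (1-x)^{-(\beta_i+1)}$; multiplying these $d$ series and extracting the coefficient of $x^{n-k}$ from the product collapses the sum to
$$\sum_{|\alpha|=n-k}\prod_{i=1}^{d}\binom{\alpha_i+\beta_i}{\alpha_i} \;=\; [x^{n-k}]\,\frac{1}{(1-x)^{|\beta|+d}} \;=\; \binom{n+d-1}{k+d-1}.$$

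It then remains only to verify
$$\binom{n+d-1}{k+d-1} \;=\; \binom{n}{k}\,\frac{\binom{n+d-1}{d-1}}{\binom{k+d-1}{d-1}},$$
which is immediate: both sides equal $k!\,(n+d-1)!/[\,n!\,(k+d-1)!\,]$ after expanding and cancelling $(d-1)!$. Every step is essentially bookkeeping — the multinomial simplification is definitional, the coefficient extraction is a one-line application of the negative binomial formula, and the final check is mechanical — so I do not anticipate a genuine obstacle. As a fallback avoiding generating functions, the middle evaluation $\sum_{|\alpha|=n-k}\prod_i\binom{\alpha_i+\beta_i}{\alpha_i}=\binom{n+d-1}{k+d-1}$ also follows by induction on $d$ using the Vandermonde-type identity $\sum_{j=0}^{m}\binom{j+a}{j}\binom{m-j+b}{m-j} = \binom{m+a+b+1}{m}$.
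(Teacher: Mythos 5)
Your proof is correct. There is, however, nothing in the paper to compare it against: the authors do not prove this lemma, they simply quote it as equation (2.6) of the cited paper of Cheng. Your argument supplies a genuine, self-contained proof. The reduction of the ratio of multinomial coefficients to $\binom{n}{k}^{-1}\prod_{i=1}^{d}\binom{\alpha_i+\beta_i}{\alpha_i}$ with $k=|\beta|$ is exact; extracting the coefficient of $x^{n-k}$ from $\prod_{i}(1-x)^{-(\beta_i+1)}=(1-x)^{-(k+d)}$ correctly gives $\binom{n+d-1}{n-k}=\binom{n+d-1}{k+d-1}$; and the closing factorial check that this equals $\binom{n}{k}\,q_{d-1}(n)/q_{d-1}(k)$ is right (both sides of the rearranged identity reduce to $k!\,(n+d-1)!/\bigl(n!\,(k+d-1)!\bigr)$). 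The generating-function route, or equivalently your fallback induction on $d$ via the Vandermonde-type convolution, is a clean way to establish what the paper treats as a black box, at the modest cost of a page of bookkeeping that the citation avoids.
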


	\begin{proposition}\label{P_n}
	For every $X \in \cB(H_{s} \otimes {\rm Ran} \Delta_{\bfT})$ and $n \geq 0$ we have
	$${\rm trace} (d\Psi (X) \tilde{P_{n}}) = \sum\limits_{i=0}^{n} w_{i,n} \frac{{\rm trace} X E_{i}}{q_{d-1}(i)}.$$
	\end{proposition}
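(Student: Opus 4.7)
The plan is to unwind the definition of $d\Psi$ so that the trace on the Hardy-space side is pulled back to $H_{s}\otimes{\rm Ran}\Delta_{\bfT}$. Since ${\rm Ran}\Delta_{\bfT}$ is finite dimensional, each projection $\tilde{E_{i}}$ (and hence $\tilde{P_{n}}$) has finite rank, so $\delta^{*}\tilde{P_{n}}\delta$ is finite rank and all traces and cyclic manipulations below are legitimate. Starting from $d\Psi(X) = \delta(X - \Phi(X))\delta^{*}$ in \eqref{d Gamma}, the cyclic property of the trace gives
$${\rm trace}\bigl(d\Psi(X)\tilde{P_{n}}\bigr) = {\rm trace}\bigl((X - \Phi(X))\,\delta^{*}\tilde{P_{n}}\delta\bigr),$$
and Lemma \ref{delta}(1) (in the form $\delta^{*}\tilde{E_{i}} = E_{i}\delta^{*}$) together with Lemma \ref{delta}(3) immediately yields
$$\delta^{*}\tilde{P_{n}}\delta = \sum_{i=0}^{n}E_{i}\,\delta^{*}\delta = \sum_{i=0}^{n}\frac{a_{i}}{q_{d-1}(i)}E_{i}.$$

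The proof thereby reduces to establishing
$$\sum_{i=0}^{n}\frac{a_{i}}{q_{d-1}(i)}\bigl[{\rm trace}(XE_{i}) - {\rm trace}(\Phi(X)E_{i})\bigr] \;=\; \sum_{i=0}^{n}\frac{w_{i,n}}{q_{d-1}(i)}\,{\rm trace}(XE_{i}).$$
For this I would compute ${\rm trace}(\Phi(X)E_{i})$ in the orthonormal basis $e_{\beta} = \sqrt{a_{\beta}}\,\bm{z}^{\beta}$ of $H_{s}$ tensored with any orthonormal basis $\{f_{k}\}$ of ${\rm Ran}\Delta_{\bfT}$. A direct calculation from the series expansion of $s$ gives $(M_{\bm{z}}^{\alpha})^{*}\bm{z}^{\beta} = (a_{\beta-\alpha}/a_{\beta})\bm{z}^{\beta-\alpha}$ for $\beta\geq\alpha$ (zero otherwise), whence $(M_{\bm{z}}^{\alpha})^{*}e_{\beta} = \sqrt{a_{\beta-\alpha}/a_{\beta}}\,e_{\beta-\alpha}$. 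Substituting into the defining series for $\Phi(X)$ and re-indexing by $\gamma = \beta - \alpha$ produces
$${\rm trace}(\Phi(X)E_{i}) \;=\; \sum_{|\gamma| < i}\;\sum_{|\alpha| = i-|\gamma|}b_{\alpha}\,\frac{a_{\gamma}}{a_{\gamma+\alpha}}\,\langle X\rangle_{\gamma},$$
where $\langle X\rangle_{\gamma} = \sum_{k}\langle X(e_{\gamma}\otimes f_{k}),e_{\gamma}\otimes f_{k}\rangle$ and $\sum_{|\gamma|=j}\langle X\rangle_{\gamma} = {\rm trace}(XE_{j})$.

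The main obstacle is to collapse the dependence of the inner sum on the individual multi-index $\alpha$ to its length $l = |\alpha|$; this is precisely where the combinatorial identity of Lemma \ref{Id2} enters. Writing $a_{\alpha} = a_{|\alpha|}\binom{|\alpha|}{\alpha}$ and $b_{\alpha} = b_{|\alpha|}\binom{|\alpha|}{\alpha}$ and applying that lemma with $\beta = \gamma$ and $n = |\gamma| + l$ gives
$$\sum_{|\alpha| = l}b_{\alpha}\,\frac{a_{\gamma}}{a_{\gamma+\alpha}} \;=\; \frac{b_{l}\,a_{|\gamma|}}{a_{|\gamma|+l}}\cdot\frac{q_{d-1}(|\gamma|+l)}{q_{d-1}(|\gamma|)}.$$
Multiplying by $a_{|\gamma|+l}/q_{d-1}(|\gamma|+l)$, then summing over $l$ from $1$ to $n-|\gamma|$ and finally over $\gamma$ with $|\gamma|\leq n-1$, the triple sum collapses to
$$\sum_{i=0}^{n}\frac{a_{i}}{q_{d-1}(i)}{\rm trace}(\Phi(X)E_{i}) \;=\; \sum_{j=0}^{n-1}\frac{a_{j}}{q_{d-1}(j)}\Bigl(\sum_{l=1}^{n-j}b_{l}\Bigr){\rm trace}(XE_{j}).$$
Subtracting from $\sum_{i=0}^{n}\frac{a_{i}}{q_{d-1}(i)}{\rm trace}(XE_{i})$ leaves the $i=n$ term with coefficient $a_{n}/q_{d-1}(n)$ and each $j\leq n-1$ term with coefficient $\frac{a_{j}}{q_{d-1}(j)}\bigl(1 - \sum_{l=1}^{n-j}b_{l}\bigr)$; by the definition \eqref{weights} of the weights $w_{i,n}$, these are precisely $w_{n,n}/q_{d-1}(n)$ and $w_{j,n}/q_{d-1}(j)$, and the claimed identity follows.
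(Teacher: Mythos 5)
Your proof is correct and follows essentially the same route as the paper's: reduce via the cyclic property of the trace and Lemma \ref{delta} to $\sum_{i=0}^{n}\frac{a_{i}}{q_{d-1}(i)}{\rm trace}((X-\Phi(X))E_{i})$, compute ${\rm trace}(\Phi(X)E_{i})$ in the monomial basis, collapse the multi-index sum with Lemma \ref{Id2}, and re-sum to recover the weights $w_{i,n}$. The only differences are cosmetic (order of the cyclic permutation and the explicit finite-rank justification).
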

	\begin{proof}
		For any $n\geq 0,$ consider
	\begin{align}
		{\rm trace}(d\Psi (X) \tilde{P_{n}}) & = {\rm trace} (\delta (X - \Phi(X)) \delta^{*} \tilde{P_{n}}) \nonumber \\
		&  = \sum\limits_{i=0}^{n} {\rm trace}((X - \Phi(X)) \delta^{*} \tilde{E_{i}} \delta) \nonumber \\
		& = \sum\limits_{i=0}^{n} {\rm trace}((X - \Phi(X)) \delta^{*} \delta E_{i}) \nonumber \\
		& = \sum\limits_{i=0}^{n} \frac{a_{i}}{q_{d-1}(i)} {\rm trace} ((X - \Phi(X)) E_{i}) \label{aux1}.
	\end{align}
	Let $\{\xi_{j}\}$ be an orthonormal basis for ${\rm Ran}\Delta_{\bfT}.$ For any $i\geq 1,$
	\begin{align*}
	{\rm trace}(\Phi (X) E_{i} )
	& = \sum\limits_{\alpha\in\mathbb{Z}^{d}_{+} \backslash \{0\}}  {\rm trace}  (b_{\alpha} (\bfM_{\bm{z}}^{\alpha} \otimes I) X (\bfM_{\bm{z}}^{\alpha} \otimes I)^{*} E_{i} )  \\
	& = \sum\limits_{\alpha\in\mathbb{Z}^{d}_{+} \backslash \{0\}}  {\rm trace}  (b_{\alpha} (\bfM_{\bm{z}}^{\alpha} \otimes I)^{*} E_{i}   (\bfM_{\bm{z}}^{\alpha} \otimes I) X  ) \\
	& =   \sum\limits_{ |\alpha| =1}^{i} b_{\alpha} \sum\limits_{|\beta| = i - |\alpha|} \frac{a_{\beta}}{a_{\alpha + \beta}}\sum\limits_{j}   \left\la X (e_{\beta} \otimes \xi_{j}), (e_{\beta} \otimes \xi_{j}) \right\ra   \\
	& =   \sum\limits_{|\beta| =0}^{i-1} \sum\limits_{j}   \left\la X (e_{\beta} \otimes \xi_{j}), (e_{\beta} \otimes \xi_{j}) \right\ra  \sum\limits_{|\alpha| = i- |\beta|}  b_{\alpha} \frac{a_{\beta}}{a_{\alpha + \beta}}
	\end{align*}
    Now using Lemma \ref{Id2}, we get that
    $$\sum\limits_{|\alpha| = i- |\beta|}  b_{\alpha} \frac{a_{\beta}}{a_{\alpha + \beta}}  = b_{i-|\beta|} \frac{a_{|\beta|}}{a_{i}} \sum\limits_{|\alpha| = i- |\beta|} \frac{\binom{|\alpha|}{\alpha} \binom{|\beta|}{\beta}}{\binom{|\alpha + \beta|}{\alpha + \beta}} =  b_{i-|\beta|} \frac{a_{|\beta|}}{a_{i}} \frac{q_{d-1}(i)}{q_{d-1}(|\beta|)}.$$
    This implies that for any $i \geq 1,$
    $${\rm trace}(\Phi (X) E_{i} ) = \frac{q_{d-1}(i)}{a_{i}} \sum\limits_{j=0}^{i-1} b_{i-j} \frac{a_{j}}{q_{d-1}(j)} {\rm trace}(X E_{j} ) .$$
    Since $\Phi(X) E_{0} = 0,$ we get that
    \begin{align*}
    \sum\limits_{i=0}^{n} {\rm trace} (\Phi(X)E_{i})
    & = \sum\limits_{i=1}^{n} \sum\limits_{j=0}^{i-1} b_{i-j} \frac{a_{j}}{q_{d-1}(j)} {\rm trace}(X E_{j} ) \\
    & =\sum\limits_{i=0}^{n-1} \sum\limits_{j = 1}^{n-i} b_{j} \frac{a_{i}}{q_{d-1}(i)} {\rm trace}(X E_{i} )
    \end{align*}
 Thus,
	\begin{align*}
			{\rm trace} (d \Psi(X) \tilde{P_{n}})
			& =  \frac{a_{n}}{q_{d-1}(n)} {\rm trace}(X E_{n} ) + \sum\limits_{i=0}^{n-1} a_{i} \left( 1 - \sum\limits_{j=1}^{n-i} b_{j}\right) \frac{{\rm trace} (X E_{i} )}{q_{d-1}(i)}\\
			& = \sum\limits_{i=0}^{n} w_{i,n} \frac{{\rm trace} (X E_{i} )}{q_{d-1}(i)}.
		\end{align*}
		This completes the proof.
	\end{proof}

	If $X \in \cB(H_{s} \otimes {\rm Ran} \Delta_{\bfT})$ is a positive operator, then $\Phi(X)$ and $d\Psi (X)$ both are self-adjoint operators. For such $X,$ by Lemma \ref{P_n} we get that
	\begin{align}\label{Pos X}
		{\rm trace}(d\Psi (X) \tilde{P_{n}}) &= \sum\limits_{i=0}^{n} w_{i,n} \frac{{\rm trace} (X E_{i} )}{q_{d-1} (i)} \leq \sum\limits_{i=0}^{n} w_{i,n} \|X\| \frac{{\rm trace} E_{i} }{q_{d-1}(i)}  \nonumber\\
		& =  {\rm dim}({\rm Ran} \Delta_{\bfT}) \|X\|  \quad \text{(by Lemma \ref{w})}.
	\end{align}

\begin{corollary}\label{trace class}
If $X \in \cB(H_{s} \otimes  {\rm Ran} \Delta_{\bfT})$ is such that $X - \Phi(X)$ is positive, then $d \Psi(X)$ is a positive trace class operator with ${\rm trace}(d \Psi(X)) \leq {\rm dim}({\rm Ran} \Delta_{\bfT}) \|X\|$.
\end{corollary}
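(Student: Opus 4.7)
The plan is to separate the two claims. First, I would note from the definition in \eqref{d Gamma} that $d\Psi(X) = \delta(X-\Phi(X))\delta^{*}$, so the hypothesis $X-\Phi(X)\ge 0$ together with the fact that $Z\mapsto \delta Z \delta^{*}$ is a completely positive map immediately gives $d\Psi(X)\ge 0$. The real work is only to show trace class membership with the stated bound.

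For that I would apply Proposition \ref{P_n} to $X$ itself (the identity there is proved for an arbitrary $X\in\cB(H_{s}\otimes {\rm Ran}\,\Delta_{\bfT})$, not merely a positive one) to obtain, for every $n$,
\[
  {\rm trace}(d\Psi(X)\tilde{P_{n}}) = \sum_{i=0}^{n} w_{i,n}\,\frac{{\rm trace}(XE_{i})}{q_{d-1}(i)}.
\]
Then I would mimic the estimate in the display preceding the corollary, with one small detour. Since our hypothesis is $X-\Phi(X)\ge 0$ rather than $X\ge 0$, one cannot invoke $X\le \|X\|\,I$ directly; instead I would use that $E_{i}$ is a finite rank projection of rank $q_{d-1}(i)\,{\rm dim}({\rm Ran}\,\Delta_{\bfT})$ and appeal to the Schatten--H\"older inequality to get
\[
  {\rm trace}(XE_{i}) \le \|XE_{i}\|_{1} \le \|X\|\,{\rm trace}(E_{i}) = \|X\|\,q_{d-1}(i)\,{\rm dim}({\rm Ran}\,\Delta_{\bfT}).
\]
Combining this with $w_{i,n}\ge 0$ (which follows from $\sum_{j\ge 1}b_{j}=1$ in Definition \ref{def}(3)) and $\sum_{i=0}^{n} w_{i,n} = 1$ from Lemma \ref{w}, one reaches the uniform estimate
\[
  {\rm trace}(d\Psi(X)\tilde{P_{n}}) \le {\rm dim}({\rm Ran}\,\Delta_{\bfT})\,\|X\| \qquad \text{for every } n.
\]

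The last step is the passage $n\to\infty$. Because $d\Psi(X)\ge 0$ and $\{\tilde{P_{n}}\}$ is an increasing sequence of finite rank projections with $\tilde{P_{n}}\uparrow I$ strongly on $H^{2}(\partial\mathbb{B}_{d})\otimes{\rm Ran}\,\Delta_{\bfT}$ (polynomials being dense), the standard monotone convergence identity ${\rm trace}(d\Psi(X)) = \sup_{n} {\rm trace}(d\Psi(X)\tilde{P_{n}})$ for positive operators delivers trace class membership and the inequality ${\rm trace}(d\Psi(X))\le {\rm dim}({\rm Ran}\,\Delta_{\bfT})\,\|X\|$ in one stroke. The only point needing attention, and the one I would single out, is the Schatten--H\"older detour that compensates for the missing positivity of $X$ itself; the rest is a direct assembly of Proposition \ref{P_n}, Lemma \ref{w}, and monotone convergence of traces for positive operators.
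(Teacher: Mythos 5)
Your proof is correct and follows essentially the same route as the paper: positivity read off from $d\Psi(X)=\delta(X-\Phi(X))\delta^{*}$, the trace identity of Proposition \ref{P_n}, the bound ${\rm trace}(XE_{i})\le\|X\|\,{\rm trace}(E_{i})$ combined with $w_{i,n}\ge 0$ and $\sum_{i=0}^{n}w_{i,n}=1$ from Lemma \ref{w}, and monotone convergence of ${\rm trace}(d\Psi(X)\tilde{P_{n}})$ along the increasing projections $\tilde{P_{n}}$. The only divergence is your Schatten--H\"older justification of ${\rm trace}(XE_{i})\le\|X\|\,{\rm trace}(E_{i})$: the paper obtains this estimate in the display \eqref{Pos X} under the assumption that $X$ itself is positive (via $X\le\|X\|I$) and then cites that display, so your detour is in fact the more careful treatment of the hypothesis $X-\Phi(X)\ge 0$ as literally stated.
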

\begin{proof}
The positivity of $d \Psi(X)$ follows direcly from the definition. Since the projections $\tilde{P_{n}}$ increase to $I$ with increasing $n,$ we conclude from \eqref{Pos X} that
$$	{\rm trace}(d \Psi (X)) = \sup\limits_{n \geq 0} {\rm trace}(d \Psi (X) \tilde{P_{n}}) \leq  {\rm dim}({\rm Ran} \Delta_{\bfT}) \|X\|  <  \infty.$$
This completes the proof.
\end{proof}

\subsection*{Proof of Theorem \ref{formulae}}
A straightforward calculation gives us that $M_{\varphi} M_{\varphi}^{*} - \Phi (M_{\varphi} M_{\varphi}^{*})$ is positive. Thus, by Corollary \ref{trace class}, $d \Psi(M_{\varphi} M_{\varphi}^{*})$ is a trace class operator. Now, we prove the integral formula for ${\rm trace} (d \Psi(M_{\varphi} M_{\varphi}^{*})).$ Define a linear map 
$$A: \cE \to H^{2}( \partial \mathbb{B}_{d}) \otimes {\rm Ran}\Delta_{\bfT}; \quad  \eta \mapsto \delta  \left( M_{\varphi}(1 \otimes \eta)  \right), \quad (\eta \in \cE).$$
The following calculation proves that $d \Psi \left( M_{\varphi} M_{\varphi}^{*}\right) = A A^{*}.$
\begin{align*}
d \Psi \left( M_{\varphi} M_{\varphi}^{*}\right) & = \delta \left(M_{\varphi} M_{\varphi}^{*} - \Phi \left( M_{\varphi} M_{\varphi}^{*}\right) \right) \delta^{*}\\
& = \delta M_{\varphi} M_{\varphi}^{*} \delta^{*} -  \sum\limits_{\alpha\in\mathbb{Z}^{d}_{+} \backslash \{0\}} b_{\alpha} \delta (\bfM_{\bm{z}}^{\alpha} \otimes I) M_{\varphi} M_{\varphi}^{*} (\bfM_{\bm{z}}^{\alpha} \otimes I)^{*}\delta^{*}\\
& = \delta M_{\varphi} (\delta M_{\varphi})^{*} - \delta M_{\varphi} \left( \sum\limits_{\alpha\in\mathbb{Z}^{d}_{+} \backslash \{0\}} b_{\alpha} (\bfM_{\bm{z}}^{\alpha} \otimes I)   (\bfM_{\bm{z}}^{\alpha} \otimes I)^{*} \right) (\delta M_{\varphi})^{*} \\
& = \delta M_{\varphi} \left( E_{0} \otimes I_{\cE}\right) (\delta M_{\varphi})^{*} = A A^{*}.
\end{align*}
Let $\{ \eta_{i}\}$ be an orthonormal basis for $\cE.$ Each $A \eta_{i}$ is an element in $H^{2}( \partial \mathbb{B}_{d}) \otimes {\rm Ran}\Delta_{\bfT}.$ We consider $A \eta_{i}$ as a function from $\mathbb{B}_{d}$ into ${\rm Ran}\Delta_{\bfT}.$ Note that
$$A \eta_{i}(\bm{z}) = \varphi (\bm{z}) \eta_{i}, \quad \bm{z}\in\mathbb{B}_{d}.$$
Hence the boundary values $\tilde{A \eta_{i}}$ of $A \eta_{i}$ are given by
$$ \tilde{ A \eta_{i}} (\bm{z}) = \tilde{\varphi}(\bm{z}) \eta_{i}, \quad \text{$\sigma$-a.e. $\bm{z} \in \partial \mathbb{B}_{d}$}.$$
Now for $\sigma-$almost every $\bm{z} \in \partial \mathbb{B}_{d},$
\begin{align*}
{\rm trace} \left( \tilde{\varphi} (\bm{z}) \tilde{\varphi}(\bm{z})^{*}\right)
& = {\rm trace} \left( \tilde{\varphi}(\bm{z})^{*} \tilde{\varphi}(\bm{z}) \right) \\
& = \sum_{i} \|  \tilde{\varphi}(\bm{z}) \eta_{i} \|^{2} \\
& = \sum_{i} \| \tilde{ A \eta_{i}} (\bm{z}) \|^{2}.
\end{align*}
Thus,
\begin{align*}
\int_{\partial \mathbb{B}_{d}} {\rm trace} \left( \tilde{\varphi} (\bm{z}) \tilde{\varphi} (\bm{z})^{*} \right) d\sigma(\bm{z})
& = \int_{\partial \mathbb{B}_{d}} \sum_{i} \| \tilde{A\eta_{i}} (\bm{z}) \|^{2} d \sigma(\bm{z})\\
& = \sum_{i} \int_{\partial \mathbb{B}_{d}}  \| \tilde{A\eta_{i}} (\bm{z}) \|^{2} d \sigma(\bm{z}) \\
& = \sum_{i} \| A \eta_{i} \|^{2} = {\rm trace}(A^{*}A) = {\rm trace}(A A^{*}) \\
& = {\rm trace}(d \Psi  (M_{\varphi} M_{\varphi}^{*})).
\end{align*}
This completes the proof of the integral formula. Next, we prove the asymptotic formula for ${\rm trace}(d \Psi (M_{\varphi} M_{\varphi}^{*})).$ Since $d\Psi(M_{\varphi} M_{\varphi}^{*})$ is a positive operator and the projections $\tilde{P_{n}}$ increase to $I_{H^{2}(\partial \mathbb{B}_{d}) \otimes {\rm Ran}\Delta_{\bfT}},$ we have
\begin{align*}
{\rm trace}(d \Psi (M_{\varphi}M_{\varphi}^{*})) & = \lim\limits_{n \rightarrow \infty} {\rm trace}(d \Psi (M_{\varphi}M_{\varphi}^{*}) \tilde{P_{n}})\\
& = \lim\limits_{n \rightarrow \infty} \sum\limits_{i=0}^{n} w_{i,n} \frac{{\rm trace} (M_{\varphi}M_{\varphi}^{*} E_{i} ) }{q_{d-1}(i)} \quad \text{(by Proposition \ref{P_n})}.
\end{align*}

\section{The  asymptotic formula for the curvature invariant}
In an attempt to improve the asymptotic formula obtained in the last section, consider again a $\varphi \in {\rm Mult}(H_{s} \otimes \cE, H_{s} \otimes {\rm Ran} \Delta_{\bfT}).$ We have a conjecture, viz.,
$$ {\rm trace} (d \Psi(M_{\varphi} M_{\varphi}^{*})) = \lim \limits_{n \rightarrow \infty} \frac{{\rm trace} (M_{\varphi} M_{\varphi}^{*} P_{n})}{q_{d}(n)} $$
which is much nicer than the formula in Theorem \ref{formulae}. However, we can prove only one half of it in Lemma \ref{liminf}. When, additionally, either $\varphi$ is inner or $\varphi$ is a polynomial, we have the other half as well. These asymptotic formulae when applied to the characteristic function lead to asymptotic formulae for the curvature invariant in Theorem \ref{thm1}.

\begin{lemma}\label{Fact X}
The series
$$\sum\limits_{\alpha\in\mathbb{Z}^{d}_{+}} a_{\alpha} (\bfM_{\bm z}^{\alpha} \otimes I) (M_{\varphi} M_{\varphi}^{*} - \Phi(M_{\varphi} M_{\varphi}^{*})) (\bfM_{\bm z}^{\alpha}\otimes I)^{*}$$
converges strongly to $M_{\varphi} M_{\varphi}^{*}.$
\end{lemma}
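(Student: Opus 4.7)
The plan is to interpret the series as a rearrangement of the Neumann-type series $\sum_{k \geq 0} \Phi^{k}(Y)$, where $Y \bydef M_{\varphi} M_{\varphi}^{*} - \Phi(M_{\varphi} M_{\varphi}^{*})$, and to exploit positivity throughout to justify both strong convergence and rearrangement.

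First, since $Y$ is positive (as used at the start of the proof of Theorem \ref{formulae}), iterating the tautological identity $M_{\varphi} M_{\varphi}^{*} = Y + \Phi(M_{\varphi} M_{\varphi}^{*})$ yields the telescoping decomposition
\[
M_{\varphi} M_{\varphi}^{*} = \sum_{k=0}^{N-1} \Phi^{k}(Y) + \Phi^{N}(M_{\varphi} M_{\varphi}^{*}), \quad N \geq 1.
\]
Each $\Phi^{k}(Y) \geq 0$, so the partial sums are monotone increasing and, by the telescope, bounded above by $M_{\varphi} M_{\varphi}^{*}$; they therefore converge strongly to some positive $T \leq M_{\varphi} M_{\varphi}^{*}$.

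Next I would show $\Phi^{N}(M_{\varphi} M_{\varphi}^{*}) \to 0$ strongly, which forces $T = M_{\varphi} M_{\varphi}^{*}$. The key step is the purity $\Phi^{N}(I) \to 0$ SOT. Expanding $\Phi^{N}(I)$ gives a non-negative combination of $(\bfM_{\bm z}^{\beta} \otimes I)(\bfM_{\bm z}^{\beta} \otimes I)^{*}$ supported on multi-indices with $|\beta| \geq N$, since each constituent multi-index in the $N$-fold composition is nonzero. Because the monomials are orthogonal in $H_{s}$ with $(\bfM_{\bm z}^{\beta})^{*} \bm{z}^{\alpha} = 0$ unless $\alpha \geq \beta$ componentwise, one obtains $\Phi^{N}(I) p = 0$ for every polynomial $p$ of degree less than $N$. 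Combined with density of polynomials in $H_{s}$ and the contractivity $0 \leq \Phi^{N}(I) \leq I$, this yields the purity, and the domination $0 \leq \Phi^{N}(M_{\varphi} M_{\varphi}^{*}) \leq \|M_{\varphi}\|^{2} \Phi^{N}(I)$ (valid by positivity of $\Phi^{N}$) finishes this step.

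Finally I would identify $\sum_{k \geq 0} \Phi^{k}(Y)$ with the series in the statement. Expanding
\[
\Phi^{k}(Y) = \sum_{\alpha_{1}, \dots, \alpha_{k} \in \mathbb{Z}_{+}^{d} \backslash \{0\}} b_{\alpha_{1}} \cdots b_{\alpha_{k}}\, (\bfM_{\bm z}^{\alpha_{1}+\cdots+\alpha_{k}} \otimes I)\, Y \,(\bfM_{\bm z}^{\alpha_{1}+\cdots+\alpha_{k}} \otimes I)^{*}
\]
and grouping terms by $\gamma = \alpha_{1} + \cdots + \alpha_{k}$, the coefficient of $(\bfM_{\bm z}^{\gamma} \otimes I) Y (\bfM_{\bm z}^{\gamma} \otimes I)^{*}$ becomes precisely the Taylor coefficient of $\bm z^{\gamma} \bar{\bm w}^{\gamma}$ in the geometric expansion $s(\bm z, \bm w) = \sum_{k \geq 0}\bigl(\sum_{\beta \neq 0} b_{\beta} \bm z^{\beta} \bar{\bm w}^{\beta}\bigr)^{k}$, i.e.\ $a_{\gamma}$. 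Since every summand is positive, the rearrangement reduces to Tonelli's theorem applied to the non-negative scalar quadratic forms $\langle \cdot\, \xi, \xi\rangle$. The main technical concern is this final rearrangement, but positivity and monotone convergence make it routine; the conceptual heart of the argument is the purity $\Phi^{N}(I) \to 0$, which rests on the annihilation of low-degree polynomials by $(\bfM_{\bm z}^{\beta})^{*}$.
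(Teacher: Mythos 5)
Your argument is correct, but it follows a genuinely different route from the paper's. The paper's proof is a two-step reduction: using the intertwining property of multipliers it factors $M_{\varphi}M_{\varphi}^{*}-\Phi(M_{\varphi}M_{\varphi}^{*})=M_{\varphi}(\Delta_{\bfM_{\bm z}}^{2}\otimes I_{\cE})M_{\varphi}^{*}$, pulls $M_{\varphi}$ and $M_{\varphi}^{*}$ outside the entire sum, and then invokes the known fact (purity of the canonical tuple $\bfM_{\bm z}$, Lemma 4.1 of \cite{BJ}) that $\sum_{\alpha}a_{\alpha}\bfM_{\bm z}^{\alpha}\Delta_{\bfM_{\bm z}}^{2}(\bfM_{\bm z}^{\alpha})^{*}=I$ strongly. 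You instead never use the multiplier structure beyond the positivity of $Y\bydef M_{\varphi}M_{\varphi}^{*}-\Phi(M_{\varphi}M_{\varphi}^{*})$: you telescope $X=\sum_{k<N}\Phi^{k}(Y)+\Phi^{N}(X)$, prove the purity $\Phi^{N}(I)\to 0$ from scratch via annihilation of low-degree polynomials, and resum $\sum_{k}\Phi^{k}(Y)$ into $\sum_{\gamma}a_{\gamma}(\bfM_{\bm z}^{\gamma}\otimes I)Y(\bfM_{\bm z}^{\gamma}\otimes I)^{*}$ using positivity and the convolution identity $\sum_{k\geq 0}\sum_{\alpha_{1}+\cdots+\alpha_{k}=\gamma,\,\alpha_{i}\neq 0}b_{\alpha_{1}}\cdots b_{\alpha_{k}}=a_{\gamma}$ (the coefficient-wise form of $s=(1-(1-1/s))^{-1}$, the same identity underlying Lemma \ref{w}). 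The paper's argument is shorter because it leans on the cited purity of $\bfM_{\bm z}$; yours is self-contained, in effect re-proves that cited fact as the special case $Y=\Delta_{\bfM_{\bm z}}^{2}\otimes I$, and is more general, since it shows $\sum_{\alpha}a_{\alpha}(\bfM_{\bm z}^{\alpha}\otimes I)(X-\Phi(X))(\bfM_{\bm z}^{\alpha}\otimes I)^{*}=X$ strongly for every positive $X$ with $X-\Phi(X)\geq 0$, not only for $X=M_{\varphi}M_{\varphi}^{*}$. The price is the extra bookkeeping in the Tonelli rearrangement, which you correctly identify as routine given that every summand is a positive operator.
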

\begin{proof}
We have
\begin{align}
X - \Phi(X) & = M_{\varphi} M_{\varphi}^{*} - \sum\limits_{\alpha\in\mathbb{Z}^{d}_{+} \backslash \{0\}} b_{\alpha} (\bfM_{\bm z}^{\alpha} \otimes I)M_{\varphi}M_{\varphi}^{*} (\bfM_{\bm z}^{\alpha} \otimes I)^{*} \nonumber\\
& = M_{\varphi} \Big( I - \sum\limits_{\alpha\in\mathbb{Z}^{d}_{+} \backslash \{0\}} b_{\alpha} (\bfM_{\bm z}^{\alpha} \otimes I_{\cE})  (\bfM_{\bm z}^{\alpha} \otimes I_{\cE})^{*}\Big) M_{\varphi}^{*} \nonumber \\
& = M_{\varphi} (\Delta_{\bfM_{\bm z}}^{2} \otimes I_{\cE})M_{\varphi}^{*} \label{phiX}.
\end{align} 	
The remaining proof follows from the convergence of the series $\sum\limits_{\alpha\in\mathbb{Z}^{d}{+}} a_{\alpha} \bfM_{\bm z}^{\alpha} \Delta_{\bfM_{\bm z}}^{2} (\bfM_{\bm z}^{\alpha})^{*}$ to the identity operator.
\end{proof}

\begin{lemma}
For any $n\geq 0$,
$$\frac{{\rm trace}(M_{\varphi} M_{\varphi}^{*} E_{n})}{q_{d-1}(n)} = \sum\limits_{i=0}^{n} \frac{a_{n-i}}{a_{n}}\frac{a_{i}}{q_{d-1}(i)} {\rm trace}((M_{\varphi} M_{\varphi}^{*} - \Phi(M_{\varphi} M_{\varphi}^{*})) E_{i}).$$
\end{lemma}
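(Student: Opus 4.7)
The plan is to substitute the identity from Lemma \ref{Fact X} and collapse the resulting double sum using Lemma \ref{Id2}. Let me write $Y = M_{\varphi} M_{\varphi}^{*} - \Phi(M_{\varphi} M_{\varphi}^{*})$, which by \eqref{phiX} equals $M_{\varphi}(\Delta_{\bfM_{\bm z}}^2 \otimes I_{\cE}) M_{\varphi}^*$ and is positive. By Lemma \ref{Fact X},
\[
M_{\varphi} M_{\varphi}^{*} = \sum_{\alpha \in \mathbb{Z}^{d}_{+}} a_{\alpha} (\bfM_{\bm z}^{\alpha} \otimes I) Y (\bfM_{\bm z}^{\alpha}\otimes I)^{*}
\]
in the strong operator topology, with every summand positive. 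Pairing with the finite-rank projection $E_n$ and using that $\operatorname{trace}(E_n (\cdot))$ is a normal positive functional, the trace exchanges with the series.

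Next I would evaluate $\operatorname{trace}(E_n M_\varphi M_\varphi^*)$ on the orthonormal basis $\{e_\beta \otimes \xi_j\}$ of $H_s \otimes \operatorname{Ran}\Delta_{\bfT}$, where $e_\beta = \sqrt{a_\beta}\,\bm{z}^\beta$ and $\{\xi_j\}$ is any orthonormal basis of $\operatorname{Ran}\Delta_{\bfT}$. A direct computation with the reproducing kernel structure of $H_s$ gives $(\bfM_{\bm z}^\alpha)^* e_\beta = \sqrt{a_{\beta-\alpha}/a_\beta}\, e_{\beta-\alpha}$ (understood as zero unless $\beta \geq \alpha$ coordinatewise). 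Substituting and reindexing $\gamma = \beta-\alpha$, the calculation becomes
\[
\operatorname{trace}(E_n M_\varphi M_\varphi^*) = \sum_{k=0}^{n} \sum_{|\alpha|=k} \sum_{|\gamma|=n-k} \frac{a_\alpha a_\gamma}{a_{\alpha+\gamma}} \, y_\gamma,
\]
where $y_\gamma := \sum_j \langle Y(e_\gamma \otimes \xi_j), e_\gamma \otimes \xi_j\rangle$.

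The key combinatorial step is to evaluate the inner sum over $|\alpha| = k$. Using $a_\alpha = a_{|\alpha|}\binom{|\alpha|}{\alpha}$ and the analogous expressions for $a_\gamma$ and $a_{\alpha+\gamma}$, this sum factors as
\[
\sum_{|\alpha|=k} \frac{a_\alpha a_\gamma}{a_{\alpha+\gamma}} = \frac{a_k a_{n-k}}{a_n} \sum_{|\alpha|=k} \frac{\binom{k}{\alpha}\binom{n-k}{\gamma}}{\binom{n}{\alpha+\gamma}} = \frac{a_k a_{n-k}}{a_n} \cdot \frac{q_{d-1}(n)}{q_{d-1}(n-k)}
\]
by Lemma \ref{Id2} applied with $(\beta, n) \leftrightarrow (\gamma, n)$. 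Recognizing $\sum_{|\gamma|=n-k} y_\gamma = \operatorname{trace}(Y E_{n-k})$, substituting $i = n-k$, and dividing through by $q_{d-1}(n)$ yields the claimed identity. The only real obstacle is the bookkeeping with multi-indices; once the replacement $\gamma = \beta - \alpha$ is made, Lemma \ref{Id2} does all the work.
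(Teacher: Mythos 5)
Your proof is correct and follows essentially the same route as the paper's: expand $M_\varphi M_\varphi^*$ via Lemma \ref{Fact X}, trace against $E_n$ on the monomial basis, reindex, and collapse the multi-index sum with Lemma \ref{Id2}; the only cosmetic difference is that the paper cyclically permutes the trace to place $E_n$ between $(\bfM_{\bm z}^{\alpha}\otimes I)^*$ and $(\bfM_{\bm z}^{\alpha}\otimes I)$ before evaluating, whereas you push $(\bfM_{\bm z}^{\alpha}\otimes I)^*$ through the degree-$n$ basis directly. Your computation of $(\bfM_{\bm z}^{\alpha})^*e_\beta$, the reindexing $\gamma=\beta-\alpha$, and the application of Lemma \ref{Id2} with $q_{d-1}(n)/q_{d-1}(n-k)$ are all correct, so the argument is sound.
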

\begin{proof}
Let $M_{\varphi} M_{\varphi}^{*} = X.$ Using Lemma \ref{Fact X} for any $n \geq 0$ we write
\begin{align*}
{\rm trace}(X E_{n})  & = \sum\limits_{|\alpha| \leq n} {\rm trace} \left( a_{\alpha} (\bfM_{\bm z}^{\alpha} \otimes I) (X - \Phi(X)) (\bfM_{\bm z}^{\alpha} \otimes I)^{*} E_{n} \right)\\
& = \sum\limits_{|\alpha| \leq n} {\rm trace} \left(a_{\alpha} (X - \Phi(X)) (\bfM_{\bm z}^{\alpha} \otimes I)^{*} E_{n} (\bfM_{\bm z}^{\alpha} \otimes I) \right).
\end{align*}
Let $\{\xi_j\}$ be an orthonormal basis for ${\rm Ran} \Delta_{\bfT}$. Then
\begin{align*}
& {\rm trace}(X E_{n}) \\
= & \sum\limits_{|\alpha| \leq n} a_{\alpha} \sum\limits_{|\beta| = n-|\alpha|} \sum\limits_{j} \left\la (X - \Phi(X)) (\bfM_{\bm z}^{\alpha} \otimes I)^{*} E_{n} (\bfM_{\bm z}^{\alpha} \otimes I)(e_{\beta} \otimes \xi_{j}), (e_{\beta} \otimes \xi_{j}) \right\ra \\
= & \sum\limits_{|\alpha| \leq n} a_{\alpha} \sum\limits_{|\beta| = n-|\alpha|} \sum\limits_{j} \frac{a_{\beta}}{a_{\alpha + \beta}} \left\la (X- \Phi(X))(e_{\beta} \otimes \xi_{j}), (e_{\beta} \otimes \xi_{j}) \right\ra \\
= & \sum\limits_{|\beta| \leq n} \sum\limits_{j} \left\la (X- \Phi(X))(e_{\beta} \otimes \xi_{j}), (e_{\beta} \otimes \xi_{j}) \right\ra \sum\limits_{|\alpha| = n-|\beta|}   \frac{a_{\alpha} a_{\beta}}{a_{\alpha + \beta}}
\end{align*}
Now we use Lemma \ref{Id2} to get that
\begin{align*}
{\rm trace}(X E_{n})= &  \sum\limits_{i=0}^{n} \frac{a_{n-i} a_{i}}{a_{n}} \frac{q_{d-1}(n)}{q_{d-1}(i)}\sum\limits_{|\beta| = i} \sum\limits_{j} \left\la (X- \Phi(X))(e_{\beta} \otimes \xi_{j}), (e_{\beta} \otimes \xi_{j}) \right\ra \\
= &  \sum\limits_{i=0}^{n} \frac{a_{n-i} a_{i}}{a_{n}} \frac{q_{d-1}(n)}{q_{d-1}(i)} {\rm trace} ((X - \Phi(X))E_{i}).
\end{align*}
This implies that
$$\frac{{\rm trace}(X E_{n})}{q_{d-1}(n)} = \sum\limits_{i=0}^{n} \frac{a_{n-i}}{a_{n}}\frac{a_{i}}{q_{d-1}(i)} {\rm trace}((X - \Phi(X)) E_{i}).$$
\end{proof}
	
\begin{lemma}\label{aux2}
 If $\varphi(\bm z) = \sum\limits_{\alpha\in\mathbb{Z}^{d}_{+}} A_{\alpha} \bm z^{\alpha},$ then for any $n\geq 0$,
$${\rm trace}((M_{\varphi} M_{\varphi}^{*}-\Phi(M_{\varphi} M_{\varphi}^{*})) E_{n}) = \sum\limits_{|\alpha| = n} \frac{1}{a_{\alpha}} {\rm trace}(A_{\alpha} A_{\alpha}^{*}).$$
\end{lemma}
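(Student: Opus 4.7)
The plan hinges on two observations. First, the proof of Lemma \ref{Fact X} established the identity \eqref{phiX},
$$ M_{\varphi}M_{\varphi}^{*} - \Phi(M_{\varphi}M_{\varphi}^{*}) \;=\; M_{\varphi}(\Delta_{\bfM_{\bm z}}^{2} \otimes I_{\cE})M_{\varphi}^{*},$$
so everything reduces to understanding $\Delta_{\bfM_{\bm z}}^{2}$ on $H_{s}$. Second, I expect $\Delta_{\bfM_{\bm z}}^{2}$ to equal the rank-one projection $E_{0}$ onto the constants. Once this is proved, the right-hand side becomes $BB^{*}$ where $B:\cE \to H_{s}\otimes {\rm Ran}\Delta_{\bfT}$ is the Fourier coefficient map $B\eta = M_{\varphi}(1\otimes\eta) = \sum_{\alpha} \bm z^{\alpha}\otimes (A_{\alpha}\eta)$, and the desired trace drops out.

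To prove $\Delta_{\bfM_{\bm z}}^{2} = E_{0}$: the monomials $\{\bm z^{\beta}\}$ form an orthogonal basis of $H_{s}$ with $\|\bm z^{\beta}\|^{2} = 1/a_{\beta}$, and a direct computation from $\langle M_{z_{i}}\bm z^{\alpha},\bm z^{\beta}\rangle = \delta_{\alpha+e_{i},\beta}/a_{\beta}$ yields $(\bfM_{\bm z}^{\gamma})^{*}\bm z^{\beta} = (a_{\beta-\gamma}/a_{\beta})\bm z^{\beta-\gamma}$ for $\gamma \leq \beta$, and $0$ otherwise. Hence every $\bfM_{\bm z}^{\gamma}(\bfM_{\bm z}^{\gamma})^{*}$ is diagonal in this basis, and so is $\Delta_{\bfM_{\bm z}}^{2}$, with eigenvalue $1$ on $\bm z^{0}$ and $1 - \sum_{0<\gamma\leq\beta} b_{\gamma}(a_{\beta-\gamma}/a_{\beta})$ on $\bm z^{\beta}$ for $\beta\neq 0$. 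To see that the latter vanishes, I group the sum by $k = |\gamma|$, substitute $b_{\gamma} = b_{k}\binom{k}{\gamma}$ and $a_{\beta-\gamma} = a_{|\beta|-k}\binom{|\beta|-k}{\beta-\gamma}$, and apply the multinomial Vandermonde identity $\sum_{|\gamma|=k,\, \gamma\leq\beta}\binom{k}{\gamma}\binom{|\beta|-k}{\beta-\gamma} = \binom{|\beta|}{\beta}$. This reduces matters to the scalar recursion $a_{n} = \sum_{k=1}^{n} b_{k}\,a_{n-k}$ for $n\geq 1$, which follows at once by equating coefficients of $\langle\bm z,\bm w\rangle^{n}$ in $s(\bm z,\bm w)\cdot s(\bm z,\bm w)^{-1}=1$ using \eqref{bn}.

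With $\Delta_{\bfM_{\bm z}}^{2} = E_{0}$ in hand, $M_{\varphi}(E_{0}\otimes I_{\cE})M_{\varphi}^{*} = BB^{*}$, and fixing an orthonormal basis $\{\eta_{j}\}$ of $\cE$ we compute
\begin{align*}
{\rm trace}\!\left((M_{\varphi}M_{\varphi}^{*} - \Phi(M_{\varphi}M_{\varphi}^{*}))E_{n}\right) &= {\rm trace}(B^{*}E_{n}B) = \sum_{j}\|E_{n}B\eta_{j}\|^{2} \\
&= \sum_{j}\sum_{|\alpha|=n}\frac{\|A_{\alpha}\eta_{j}\|^{2}}{a_{\alpha}} = \sum_{|\alpha|=n}\frac{{\rm trace}(A_{\alpha}A_{\alpha}^{*})}{a_{\alpha}},
\end{align*}
where the penultimate equality uses orthogonality of the monomials together with $\|\bm z^{\alpha}\|_{H_{s}}^{2} = 1/a_{\alpha}$, and the last equality uses ${\rm trace}(A_{\alpha}^{*}A_{\alpha}) = {\rm trace}(A_{\alpha}A_{\alpha}^{*})$ (finite since $\mathrm{Ran}\,\Delta_{\bfT}$ is finite-dimensional).

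The main obstacle is the identification $\Delta_{\bfM_{\bm z}}^{2} = E_{0}$. The ingredients are classical, but combining the CNP recursion on $(a_{n},b_{n})$ with the multinomial Vandermonde identity to promote the scalar identity to the multi-index statement requires some care; everything downstream is routine bookkeeping with orthogonal bases.
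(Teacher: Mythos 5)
Your proposal is correct and follows essentially the same route as the paper: reduce via \eqref{phiX} to $M_{\varphi}(\Delta_{\bfM_{\bm z}}^{2}\otimes I_{\cE})M_{\varphi}^{*}$, identify $\Delta_{\bfM_{\bm z}}^{2}$ with the projection $E_{0}$ onto the constants, and evaluate the trace against an orthonormal basis of $\cE$. The only difference is that you prove $\Delta_{\bfM_{\bm z}}^{2}=E_{0}$ in detail (via the recursion $a_{n}=\sum_{k=1}^{n}b_{k}a_{n-k}$ and the Vandermonde identity), whereas the paper takes this standard fact for granted, using it implicitly both here and in the proof of Theorem \ref{formulae}.
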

\begin{proof}
Using \eqref{phiX} for any $n \geq 0$ we write
$$ (M_{\varphi} M_{\varphi}^{*} - \Phi(M_{\varphi} M_{\varphi}^{*}))E_{n} = M_{\varphi} (\Delta_{\bfM_{\bm z}}^{2} \otimes I_{\cE}) M_{\varphi}^{*} E_{n}.$$
This implies that
\begin{align*}
{\rm trace}((M_{\varphi} M_{\varphi}^{*}-\Phi(M_{\varphi} M_{\varphi}^{*})) E_{n})
& = {\rm trace} (M_{\varphi} (\Delta_{\bfM_{\bm z}}^{2} \otimes I_{\cE}) M_{\varphi}^{*} E_{n})\\
& = {\rm trace} (M_{\varphi}^{*} E_{n} M_{\varphi} (\Delta_{\bfM_{\bm z}}^{2} \otimes I_{\cE})).
\end{align*}
Let $\{ \eta_{i}\}$ be an orthonormal basis for $\cE.$ Then
\begin{align*}
{\rm trace}((M_{\varphi} M_{\varphi}^{*}-\Phi(M_{\varphi} M_{\varphi}^{*})) E_{n})
& = \sum\limits_{i} \left\la E_{n} M_{\varphi} (1 \otimes \eta_{i}), E_{n} M_{\varphi} (1 \otimes \eta_{i}) \right\ra	\\
& = \sum\limits_{i} \sum\limits_{|\alpha| =n} \left\| \bm z^{\alpha} \otimes A_{\alpha} \eta_{i}  \right\|^{2}\\
& =  \sum\limits_{|\alpha| =n} \frac{1}{a_{\alpha}} {\rm trace}(A_{\alpha} A_{\alpha}^{*}).
\end{align*}
\end{proof}

\begin{corollary}\label{trace XE_n}
If $\varphi(\bm z) = \sum\limits_{\alpha\in\mathbb{Z}^{d}_{+}} A_{\alpha} \bm z^{\alpha},$ then for any $n\geq 0$,
$$\frac{{\rm trace}(M_{\varphi} M_{\varphi}^{*} E_{n})}{q_{d-1}(n)} = \sum\limits_{i=0}^{n} \sum\limits_{|\alpha| =i} \frac{a_{n-i}}{a_{n}}\frac{1}{q_{d-1}(i)}  \frac{{\rm trace}(A_{\alpha} A_{\alpha}^{*})} {\binom{i}{\alpha}}.$$
	\end{corollary}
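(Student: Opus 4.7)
The plan is to obtain this corollary by directly chaining together the two preceding lemmas. The unlabeled lemma immediately before Corollary~\ref{trace XE_n} gives the identity
$$\frac{{\rm trace}(M_{\varphi} M_{\varphi}^{*} E_{n})}{q_{d-1}(n)} = \sum_{i=0}^{n} \frac{a_{n-i}}{a_{n}}\frac{a_{i}}{q_{d-1}(i)} {\rm trace}\bigl((M_{\varphi} M_{\varphi}^{*}-\Phi(M_{\varphi} M_{\varphi}^{*})) E_{i}\bigr),$$
while Lemma~\ref{aux2} evaluates the inner trace as
$${\rm trace}\bigl((M_{\varphi} M_{\varphi}^{*}-\Phi(M_{\varphi} M_{\varphi}^{*})) E_{i}\bigr) = \sum_{|\alpha|=i} \frac{1}{a_{\alpha}} {\rm trace}(A_{\alpha} A_{\alpha}^{*}).$$
So the first step is simply to substitute the second identity into the first.

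The second step is a tiny algebraic simplification. By the very definition introduced after \eqref{bn}, we have $a_{\alpha} = a_{|\alpha|} \binom{|\alpha|}{\alpha}$, so for each $\alpha$ with $|\alpha|=i$ one has $\frac{1}{a_{\alpha}} = \frac{1}{a_{i}\binom{i}{\alpha}}$. After substitution the factor $a_i$ appearing in the first lemma cancels exactly against the $a_i$ in the denominator of $1/a_{\alpha}$, leaving
$$\frac{{\rm trace}(M_{\varphi} M_{\varphi}^{*} E_{n})}{q_{d-1}(n)} = \sum_{i=0}^{n} \frac{a_{n-i}}{a_{n}} \frac{1}{q_{d-1}(i)} \sum_{|\alpha|=i} \frac{{\rm trace}(A_{\alpha}A_{\alpha}^{*})}{\binom{i}{\alpha}},$$
which, on rewriting the double sum as a single iterated sum indexed by $i$ and $\alpha$ with $|\alpha|=i$, is precisely the asserted formula.

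Since both ingredients are already proved, there is no real obstacle here; the statement is a bookkeeping consequence of the two preceding lemmas, and the only thing to check is the cancellation of the $a_i$ factor via the definition of $a_\alpha$.
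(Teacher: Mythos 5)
Your proposal is correct and is exactly the derivation the paper intends: the corollary is stated without proof precisely because it follows by substituting Lemma~\ref{aux2} into the preceding unlabeled lemma and cancelling $a_i$ against $a_\alpha = a_i\binom{i}{\alpha}$. No issues.
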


\begin{lemma}
If $\varphi(\bm z) = \sum\limits_{\alpha\in\mathbb{Z}^{d}_{+}} A_{\alpha} \bm z^{\alpha},$ then
$${\rm trace} (d \Psi(M_{\varphi} M_{\varphi}^{*})) = \sum\limits_{i=0}^{\infty} \sum\limits_{|\alpha| =i} \frac{1}{q_{d-1}(i)}  \frac{{\rm trace} (A_{\alpha} A_{\alpha}^{*})}{\binom{i}{\alpha}} < \infty.$$
\end{lemma}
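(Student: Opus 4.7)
The plan is to combine the intermediate identity from the proof of Proposition \ref{P_n} with Lemma \ref{aux2}, and then pass to the limit using positivity of the relevant operator. Concretely, the computation \eqref{aux1} inside the proof of Proposition \ref{P_n} gives, for any bounded $X$ and any $n \geq 0$,
$$ {\rm trace}(d\Psi(X)\tilde{P_n}) = \sum_{i=0}^n \frac{a_i}{q_{d-1}(i)}\, {\rm trace}\bigl((X - \Phi(X)) E_i\bigr). $$
Specializing to $X = M_{\varphi} M_{\varphi}^{*}$ and substituting the polynomial expansion from Lemma \ref{aux2} yields
$$ {\rm trace}\bigl(d\Psi(M_{\varphi} M_{\varphi}^{*})\tilde{P_n}\bigr) = \sum_{i=0}^n \frac{a_i}{q_{d-1}(i)} \sum_{|\alpha|=i} \frac{{\rm trace}(A_\alpha A_\alpha^*)}{a_\alpha}. $$
Since $a_\alpha = a_{|\alpha|}\binom{|\alpha|}{\alpha}$, the $a_i$ factors cancel and the right-hand side collapses to the truncation $\sum_{i=0}^n \sum_{|\alpha|=i} \frac{1}{q_{d-1}(i)}\frac{{\rm trace}(A_\alpha A_\alpha^*)}{\binom{i}{\alpha}}$ of the desired series.

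To conclude, I would appeal to Corollary \ref{trace class}. The identity \eqref{phiX} in Lemma \ref{Fact X} shows that $M_{\varphi} M_{\varphi}^{*} - \Phi(M_{\varphi} M_{\varphi}^{*}) = M_\varphi(\Delta_{\bfM_{\bm z}}^{2} \otimes I_\cE) M_\varphi^*$ is positive, so $d\Psi(M_{\varphi} M_{\varphi}^{*})$ is a positive trace class operator with trace bounded by $\dim({\rm Ran}\,\Delta_\bfT)\|M_\varphi\|^2$. The projections $\tilde{P_n}$ increase strongly to the identity on $H^{2}(\partial\mathbb{B}_d) \otimes {\rm Ran}\,\Delta_\bfT$, and for a positive trace class operator this forces monotone convergence of the truncated traces:
$$ {\rm trace}\bigl(d\Psi(M_{\varphi} M_{\varphi}^{*})\bigr) = \lim_{n \to \infty} {\rm trace}\bigl(d\Psi(M_{\varphi} M_{\varphi}^{*})\tilde{P_n}\bigr). $$
Combining with the display above gives the stated double-series formula, and the bound from Corollary \ref{trace class} simultaneously yields finiteness.

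No serious obstacle is anticipated: the lemma is essentially a bookkeeping assembly of previously established ingredients. The only point requiring genuine care is ensuring that one obtains convergence (rather than a mere $\liminf$) of the truncated sums to the full trace, which is guaranteed precisely by positivity of $d\Psi(M_{\varphi} M_{\varphi}^{*})$ combined with Corollary \ref{trace class}; without positivity the argument would only deliver an inequality, as foreshadowed by the partial progress mentioned in Lemma \ref{liminf}.
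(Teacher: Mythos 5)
Your proposal is correct and follows essentially the same route as the paper's own proof: both use the identity \eqref{aux1} for ${\rm trace}(d\Psi(X)\tilde{P_n})$, substitute the expansion from Lemma \ref{aux2}, cancel the $a_i$ factors via $a_\alpha = a_{|\alpha|}\binom{|\alpha|}{\alpha}$, and then invoke Corollary \ref{trace class} together with $\tilde{P_n} \uparrow I$ to pass to the limit. No issues.
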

\begin{proof}
Using \eqref{aux1} for any $n\geq 0$ we write
$${\rm trace} (d \Psi(M_{\varphi} M_{\varphi}^{*}) \tilde{P_{n}}) = \sum\limits_{i=0}^{n} \frac{a_{i}}{q_{d-1}(i)} {\rm trace} (( M_{\varphi} M_{\varphi}^{*} - \Phi(M_{\varphi} M_{\varphi}^{*}))E_{i}).$$
Now, using Lemma \ref{aux2} we get that for any $n \geq 0,$
\begin{align*}
{\rm trace} (d \Psi(M_{\varphi} M_{\varphi}^{*}) \tilde{P_{n}}) 
& =  \sum\limits_{i=0}^{n} \frac{a_{i}}{q_{d-1}(i)} \sum\limits_{|\alpha| =i} \frac{1}{a_{\alpha}} {\rm trace}(A_{\alpha} A_{\alpha}^{*}) \\
& =  \sum\limits_{i=0}^{n} \sum\limits_{|\alpha| =i} \frac{1}{q_{d-1}(i)}  \frac{1}{\binom{i}{\alpha}} {\rm trace}(A_{\alpha} A_{\alpha}^{*})
\end{align*}
By Corollary \ref{trace class}, we know that $d \Psi(M_{\varphi} M_{\varphi}^{*})$ is a positive trace class operator. Since the projections $\tilde{P_{n}}$ converge increasingly to the identity operator, we get that ${\rm trace} (d \Psi(X) \tilde{P_{n}})$ increases to ${\rm trace}(d \Psi(M_{\varphi} M_{\varphi}^{*})).$ This completes the proof.
\end{proof}

\begin{lemma} \label{liminf}
We have
$$\liminf\limits_{n \rightarrow \infty} \frac{{\rm trace} (M_{\varphi} M_{\varphi}^{*} P_{n})}{q_{d}(n)} \geq  \liminf\limits_{n \rightarrow \infty} \frac{{\rm trace} (M_{\varphi} M_{\varphi}^{*}E_{n})}{q_{d-1}(n)} \geq {\rm trace} (d \Psi(M_{\varphi} M_{\varphi}^{*})) .$$
\end{lemma}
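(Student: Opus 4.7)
Abbreviate $X = M_{\varphi} M_{\varphi}^{*}$, and recall from the proof of Theorem~\ref{formulae} that $X - \Phi(X) \geq 0$. The two inequalities will be handled separately: the first by a Ces\`aro-averaging argument exploiting the hockey-stick identity $q_{d}(n) = \sum_{i=0}^{n} q_{d-1}(i)$, and the second by a Fatou-type liminf interchange driven by the regularity condition on the coefficients $\{a_{n}\}$.

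For the first inequality, the hockey-stick identity gives
$$\frac{{\rm trace}(X P_{n})}{q_{d}(n)} = \frac{\sum_{i=0}^{n} {\rm trace}(X E_{i})}{\sum_{i=0}^{n} q_{d-1}(i)}.$$
Since ${\rm trace}(X E_{i}) = {\rm trace}(E_{i} X E_{i}) \geq 0$, while $q_{d-1}(i) > 0$ and $\sum_{i} q_{d-1}(i) = \infty$, the claim reduces to the standard Ces\`aro-type bound
$$\liminf_{n} \frac{\sum_{i \leq n} t_{i}}{\sum_{i \leq n} s_{i}} \geq \liminf_{n} \frac{t_{n}}{s_{n}}$$
for nonnegative $t_{i}$ and positive $s_{i}$ with $\sum s_{i} = \infty$. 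This is a short $\varepsilon$--$N$ argument: pick $L < \liminf t_{n}/s_{n}$, find $N$ with $t_{i} \geq L s_{i}$ for all $i \geq N$, and note that the bounded contribution from $i < N$ is washed out on dividing by $\sum_{i \leq n} s_{i} \to \infty$.

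For the second inequality, set
$$c_{i} = \frac{a_{i}}{q_{d-1}(i)} \, {\rm trace}\bigl((X - \Phi(X)) E_{i}\bigr) \geq 0.$$
The (unlabelled) lemma immediately preceding Corollary~\ref{trace XE_n} rewrites as
$$\frac{{\rm trace}(X E_{n})}{q_{d-1}(n)} = \sum_{i=0}^{n} \frac{a_{n-i}}{a_{n}}\, c_{i},$$
while equation~\eqref{aux1} gives $\sum_{i=0}^{n} c_{i} = {\rm trace}(d\Psi(X) \tilde{P_{n}})$, which increases to ${\rm trace}(d\Psi(X)) < \infty$ by Corollary~\ref{trace class}. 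For any fixed $k$ and $n \geq k$, nonnegativity of the $c_{i}$'s yields
$$\sum_{i=0}^{n} \frac{a_{n-i}}{a_{n}}\, c_{i} \;\geq\; \sum_{i=0}^{k} \frac{a_{n-i}}{a_{n}}\, c_{i}.$$
Condition (2) in Definition~\ref{def} forces $a_{n-i}/a_{n} \to 1$ as $n \to \infty$ for each fixed $i$, since this is a finite telescoping product of consecutive ratios each tending to $1$. Therefore $\liminf_{n} \sum_{i=0}^{n} \frac{a_{n-i}}{a_{n}}\, c_{i} \geq \sum_{i=0}^{k} c_{i}$ for every $k$, and letting $k \to \infty$ concludes the argument.

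The only non-routine input is the shifted-ratio convergence $a_{n-i}/a_{n} \to 1$, which is exactly where the regularity assumption enters and which allows us to pass the limit inside the finite partial sums. Nonnegativity of the $c_{i}$'s then upgrades the Fatou-style interchange into the desired one-sided inequality, with no need for uniform integrability or additional estimates on the tail.
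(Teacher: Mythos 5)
Your proof is correct, and for the substantive second inequality it is essentially the paper's argument in a slightly cleaner dress: the paper expands ${\rm trace}((X-\Phi(X))E_i)$ via the Taylor coefficients $A_\alpha$ of $\varphi$ (Lemma \ref{aux2} and Corollary \ref{trace XE_n}) and runs an $\epsilon$--$n_0$ truncation, whereas you work directly with $c_i = \frac{a_i}{q_{d-1}(i)}{\rm trace}((X-\Phi(X))E_i)\ge 0$ and truncate at a fixed $k$; the two are the same Fatou-type interchange, hinging in both cases on nonnegativity (from $X-\Phi(X)\ge 0$), on $\sum_{i\le n}c_i={\rm trace}(d\Psi(X)\tilde{P_n})\uparrow{\rm trace}(d\Psi(X))$, and on the telescoping consequence $a_{n-i}/a_n\to 1$ of condition (2) in Definition \ref{def}. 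The genuine difference is the first inequality: the paper simply cites Lemma 3.22 of Arveson \cite{curv}, while you prove it from scratch via the hockey-stick identity $q_d(n)=\sum_{i=0}^n q_{d-1}(i)$ and the standard Stolz--Ces\`aro bound $\liminf_n \frac{\sum_{i\le n}t_i}{\sum_{i\le n}s_i}\ge\liminf_n\frac{t_n}{s_n}$ for $t_i\ge 0$, $s_i>0$, $\sum s_i=\infty$. That buys self-containedness at the cost of a few lines; the paper's citation buys brevity. Both halves of your argument are sound, and the one point worth stating explicitly (which you do note) is that the positivity of $X-\Phi(X)$, established in the proof of Theorem \ref{formulae} via \eqref{phiX}, is what makes the $c_i$ nonnegative and the one-sided truncation legitimate.
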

\begin{proof}
The first inequality follows from Lemma 3.22 in \cite{curv}. Now we shall prove the second inequality. Let $\varphi(\bm z) = \sum\limits_{\alpha\in\mathbb{Z}^{d}_{+}} A_{\alpha} \bm z^{\alpha}$. Let $\epsilon > 0$ be given. Then there exists a positive integer $n_{0}$ such that
$$ \sum\limits_{i=n_{0}}^{\infty} \sum\limits_{|\alpha| =i} \frac{1}{q_{d-1}(i)}  \frac{{\rm trace} (A_{\alpha} A_{\alpha}^{*})}{\binom{i}{\alpha}}  < \epsilon.$$
For any $n \geq n_{0}$, we get that
$$ \frac{{\rm trace} (XE_{n})}{q_{d-1}(n)}  \geq  \sum\limits_{i=0}^{n_{0}} \sum\limits_{|\alpha| =i} \frac{a_{n-i}}{a_{n}}\frac{1}{q_{d-1}(i)}  \frac{{\rm trace}(A_{\alpha} A_{\alpha}^{*})} {\binom{i}{\alpha}} .$$
Now, using the fact that $\lim\limits_{n \rightarrow \infty} \frac{a_{n}}{a_{n+1}} =1,$ we get the following:
$$\liminf\limits_{n \rightarrow \infty} \frac{{\rm trace} (XE_{n})}{q_{d-1}(n)} \geq {\rm trace}(d \Psi(X)) - \epsilon .$$
Since this holds for any $\epsilon >0,$ we get the desired inequality.
\end{proof}

In the following theorem, we prove that if $\varphi$ is a polynomial, then the inequalities in Lemma \ref{liminf} become equalities and $\liminf$ become $\lim.$
\begin{thm}
Let $\varphi \in {\rm Mult}(H_{s} \otimes \cE, H_{s} \otimes {\rm Ran} \Delta_{\bfT})$ be a polynomial, i.e., there exists a non-negative integer $N$ such that $\varphi(\bm z) = \sum\limits_{|\alpha| \leq N} A_{\alpha} \bm z^{\alpha}.$ Then 
$${\rm trace} (d \Psi (M_{\varphi} M_{\varphi}^{*})) = \lim\limits_{n \rightarrow \infty} \frac{{\rm trace} (M_{\varphi} M_{\varphi}^{*}E_{n})}{q_{d-1}(n)}  = \lim\limits_{n \rightarrow \infty} \frac{{\rm trace} (M_{\varphi} M_{\varphi}^{*} P_{n})}{q_{d}(n)}.$$
\end{thm}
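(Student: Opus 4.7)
The plan is to exploit Lemma \ref{liminf}, which already supplies the $\liminf$ half of the chain: both $\liminf_n \frac{{\rm trace}(M_\varphi M_\varphi^* P_n)}{q_d(n)}$ and $\liminf_n \frac{{\rm trace}(M_\varphi M_\varphi^* E_n)}{q_{d-1}(n)}$ dominate ${\rm trace}(d\Psi(M_\varphi M_\varphi^*))$. Hence it suffices to produce matching $\limsup$ bounds in the polynomial case, which will upgrade each $\liminf$ to an actual limit and pin the common value to ${\rm trace}(d\Psi(M_\varphi M_\varphi^*))$.

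First I would establish the middle equality. Since $A_\alpha = 0$ whenever $|\alpha| > N$, Corollary \ref{trace XE_n} collapses, for every $n \geq N$, to the finite sum
$$\frac{{\rm trace}(M_\varphi M_\varphi^* E_n)}{q_{d-1}(n)} \;=\; \sum_{i=0}^{N} \sum_{|\alpha|=i} \frac{a_{n-i}}{a_{n}} \cdot \frac{1}{q_{d-1}(i)} \cdot \frac{{\rm trace}(A_\alpha A_\alpha^*)}{\binom{i}{\alpha}}.$$
The regularity hypothesis $a_n/a_{n+1} \to 1$ from Definition \ref{def}, together with the telescoping identity $a_{n-i}/a_n = \prod_{j=0}^{i-1} a_{n-i+j}/a_{n-i+j+1}$, gives $a_{n-i}/a_n \to 1$ as $n \to \infty$ for each fixed $i \leq N$. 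Passing to the limit term-by-term in the finite sum and matching the result with the closed-form expression for ${\rm trace}(d\Psi(M_\varphi M_\varphi^*))$ produced by the lemma immediately preceding Lemma \ref{liminf} (which truncates identically at $|\alpha|\leq N$) delivers the first equality.

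For the second equality I would invoke the hockey-stick identity
$$\sum_{i=0}^{n} q_{d-1}(i) \;=\; \sum_{i=0}^{n}\binom{i+d-1}{d-1} \;=\; \binom{n+d}{d} \;=\; q_{d}(n)$$
to rewrite
$$\frac{{\rm trace}(M_\varphi M_\varphi^* P_n)}{q_d(n)} \;=\; \sum_{i=0}^{n} \frac{q_{d-1}(i)}{q_d(n)} \cdot \frac{{\rm trace}(M_\varphi M_\varphi^* E_i)}{q_{d-1}(i)}$$
as a weighted average with non-negative weights summing to $1$; since $q_d(n)\to\infty$, each individual weight $q_{d-1}(i)/q_d(n)$ tends to $0$ for fixed $i$. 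A standard Toeplitz/regular-summation argument then converts the convergence of $\frac{{\rm trace}(M_\varphi M_\varphi^* E_i)}{q_{d-1}(i)}$ established in the previous step into convergence of the weighted average to the same limit, giving the second equality.

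I do not anticipate a serious obstacle. The only subtlety is that in the non-polynomial case the infinite tail of the Corollary \ref{trace XE_n} series would be incompatible with pointwise passage to the limit — this is precisely why the polynomial hypothesis is decisive, as it makes the sum finite and uniform in $n$ and lets the convergence $a_{n-i}/a_n \to 1$ be applied one summand at a time. The Toeplitz step in the second half is routine provided one first secures the middle-term limit.
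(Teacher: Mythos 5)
Your proof is correct and follows essentially the same route as the paper: both use Corollary \ref{trace XE_n} to reduce the quotient ${\rm trace}(M_{\varphi}M_{\varphi}^{*}E_{n})/q_{d-1}(n)$ to a finite sum that converges term-by-term via $a_{n-i}/a_{n}\to 1$ to the closed-form expression for ${\rm trace}(d\Psi(M_{\varphi}M_{\varphi}^{*}))$. The only difference is that for the passage from the $E_n$-limit to the $P_n$-limit the paper simply cites Lemma 3.22 of Arveson's curvature paper, whereas you write out the underlying Ces\`aro/Toeplitz averaging argument (via the hockey-stick identity $\sum_{i=0}^{n} q_{d-1}(i)=q_{d}(n)$) explicitly; that argument is valid and is precisely the content of the cited lemma.
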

\begin{proof}
By Corollary \ref{trace XE_n}, for $n \geq N,$ we get that
$$\frac{{\rm trace}(M_{\varphi} M_{\varphi}^{*} E_{n})}{q_{d-1}(n)} = \sum\limits_{i=0}^{N} \sum\limits_{|\alpha| =i} \frac{a_{n-i}}{a_{n}}\frac{1}{q_{d-1}(i)}  \frac{{\rm trace}(A_{\alpha} A_{\alpha}^{*})} {\binom{i}{\alpha}}.$$
Now, using the fact that $\lim\limits_{n \rightarrow \infty} \frac{a_{n}}{a_{n+1}} =1,$ we get the following:
$$\lim\limits_{n \rightarrow \infty}\frac{{\rm trace}(M_{\varphi} M_{\varphi}^{*} E_{n})}{q_{d-1}(n)} = \sum\limits_{i=0}^{N} \sum\limits_{|\alpha| =i} \frac{1}{q_{d-1}(i)}  \frac{{\rm trace}(A_{\alpha} A_{\alpha}^{*})} {\binom{i}{\alpha}} = {\rm trace}(d \Psi(M_{\varphi} M_{\varphi}^{*})).$$
The equality 
$$\lim\limits_{n \rightarrow \infty} \frac{{\rm trace} (M_{\varphi} M_{\varphi}^{*}E_{n})}{q_{d-1}(n)}  = \lim\limits_{n \rightarrow \infty} \frac{{\rm trace} (M_{\varphi} M_{\varphi}^{*} P_{n})}{q_{d}(n)}$$
follows from Lemma 3.22 in \cite{curv}.
\end{proof}

An operator valued function $\varphi \in {\rm Mult}(H_{s} \otimes \cE, H_{s} \otimes \mathcal F )$  is called an {\em inner multiplier} if the multiplication operator $M_{\varphi} : H_{s} \otimes \cE \to H_{s} \otimes \mathcal F$ is a partial isometry or equivalently, $M_{\varphi} M_{\varphi}^{*}$ is an orthogonal projection. 
As mentioned at the beginning of this section, we would like to make the inequality in Lemma \ref{liminf} an equality and $\liminf$ a $\lim.$ This can be done for an inner multiplier. In that case, it also relates to another concept which we define below.

\begin{definition}
Let $\cM \subseteq H_{s} \otimes {\rm Ran} \Delta_{\bfT},$ and $\cM(\lambda) = \{ f(\lambda): f \in \cM\} \subset {\rm Ran} \Delta_{\bfT}$ for any $\lambda \in \mathbb{B}_{d}.$ Then the fibre dimension of $\cM$ is defined by
$$fd(\cM) = \sup\limits_{\lambda \in \mathbb{B}_{d}} {\rm dim} \cM(\lambda).$$
\end{definition}

The following remarkable theorem was proved in \cite{GRS}.

\begin{thm}[Greene--Richter--Sundberg]\label{GRS}
If $\varphi \in {\rm Mult}(H_{s} \otimes \cE, H_{s} \otimes {\rm Ran}\Delta_{\bfT})$ is an inner multiplier, then $\tilde{\varphi}(\bm z)$ is a partial isometry with ${\rm rank}(\tilde{\varphi}(\bm z)) = fd({\rm Ran} M_{\varphi})$ for $\sigma$ a. e. $\partial \mathbb{B}_{d}.$
\end{thm}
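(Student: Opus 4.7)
The theorem makes two claims about an inner multiplier $\varphi$ on the CNP space: that $\tilde{\varphi}(\bm z)$ is a partial isometry for $\sigma$-a.e.\ $\bm z \in \partial \mathbb{B}_{d}$, and that its rank equals the fibre dimension $fd({\rm Ran}\,M_{\varphi})$. My plan would attack these in two stages, separating the rank computation (an analytic statement about $\varphi$ in the interior) from the partial-isometry conclusion (a boundary statement).

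Stage 1 identifies the fibre dimension with the generic interior rank. Put $k := \sup_{\bm\lambda \in \mathbb{B}_{d}} {\rm rank}\,\varphi(\bm\lambda)$, which by definition equals $fd({\rm Ran}\,M_{\varphi})$. After fixing orthonormal bases in $\cE$ and ${\rm Ran}\,\Delta_{\bfT}$, the condition ${\rm rank}\,\varphi(\bm\lambda) \geq r$ is the non-vanishing of at least one $r \times r$ minor, and these minors are holomorphic in $\bm\lambda$. Hence $\{\bm\lambda : {\rm rank}\,\varphi(\bm\lambda) < k\}$ is a proper analytic subvariety of $\mathbb{B}_{d}$, so ${\rm rank}\,\varphi(\bm\lambda) = k$ off a set whose intersection with any radial segment is discrete.

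Stage 2 shows $\tilde{\varphi}(\bm z)$ is a partial isometry. I would realize $\varphi$ through a transfer function representation adapted to the CNP structure: there exist a Hilbert space $\cX$ and a contractive colligation
$$\begin{pmatrix} A & B \\ C & D \end{pmatrix} : \cE \oplus \cX \to {\rm Ran}\,\Delta_{\bfT} \oplus \cX$$
such that $\varphi(\bm z) = A + B(I - \bfZ D)^{-1} \bfZ C$, where $\bfZ = \bfZ(\bm z)$ is the row operator from the paper. Because $M_{\varphi}^* M_{\varphi}$ is a projection, one can arrange the colligation to be coisometric. Propagating coisometry to the boundary via radial limits would yield a pointwise identity $\tilde{\varphi}(\bm z) \tilde{\varphi}(\bm z)^* = P(\bm z)$, with $P(\bm z)$ an orthogonal projection on ${\rm Ran}\,\Delta_{\bfT}$, for $\sigma$-a.e.\ $\bm z$. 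Hence $\tilde{\varphi}(\bm z)$ is a partial isometry, and its rank is ${\rm trace}\,P(\bm z)$. The rank-matching with $k$ is then forced by combining Stage 1 with the inequality ${\rm rank}\,\tilde{\varphi}(\bm z) \leq k$ coming from the interior generic rank.

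The hardest step is Stage 2, specifically the boundary passage in the realization. Since $\|\bfZ(\bm z)\|^2 = 1 - 1/s(\bm z, \bm z) \to 1$ as $|\bm z| \uparrow 1$, the resolvent $(I - \bfZ(r\bm z) D)^{-1}$ is near-singular, and one must exploit the regularity hypothesis $a_n / a_{n+1} \to 1$ to convert the interior coisometric identity into a clean a.e.\ pointwise identity at the boundary. Making this precise --- uniformly enough to guarantee that the radial limit of $\tilde{\varphi} \tilde{\varphi}^*$ is genuinely a projection, rather than merely a contraction --- is the analytical core of the Greene--Richter--Sundberg argument in \cite{GRS}.
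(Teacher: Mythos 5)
First, a point of comparison: the paper gives no proof of this statement at all --- it is quoted verbatim as a known theorem of Greene--Richter--Sundberg and attributed to \cite{GRS}. So your attempt is not being measured against an argument in the paper; the question is whether your sketch would stand on its own as a proof of the cited result. It would not. Stage 1 is essentially fine (modulo the small observation that $\mathcal M(\lambda)=\varphi(\lambda)\cE$ because constants lie in $H_s$, so $fd({\rm Ran}\,M_\varphi)$ really is the generic rank $k$ of $\varphi$ on $\mathbb B_d$; and the target ${\rm Ran}\,\Delta_{\bfT}$ is finite dimensional, so the minor argument applies). The problem is Stage 2, which is the entire content of the theorem.

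The mechanism you propose for Stage 2 does not work as stated. By the realization theorem for contractive multipliers of complete Nevanlinna--Pick spaces (Ball--Trent--Vinnikov), \emph{every} contractive element of ${\rm Mult}(H_s\otimes\cE, H_s\otimes\cF)$ admits a coisometric (indeed unitary) colligation; coisometry of the colligation therefore carries no information about $\varphi$ being inner, and cannot by itself force the radial limits $\tilde\varphi(\bm z)$ to be partial isometries --- already for $d=1$ and the Szeg\H{o} kernel, non-inner Schur functions have coisometric realizations but non-unimodular boundary values. You acknowledge this yourself when you write that converting the interior identity into an a.e.\ boundary projection identity ``is the analytical core of the Greene--Richter--Sundberg argument'': that is precisely the step a proof must supply, and your sketch defers it back to \cite{GRS}. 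Separately, even granting that $\tilde\varphi(\bm z)$ is a partial isometry, your rank-matching is only half an argument: strong convergence of $\varphi(r\bm z)$ to $\tilde\varphi(\bm z)$ gives ${\rm rank}\,\tilde\varphi(\bm z)\le k$ (rank cannot jump up in the limit when the target is finite dimensional), but nothing you have written rules out the rank dropping below $k$ on a set of positive measure; the reverse inequality needs a genuinely new input (in \cite{GRS} it comes from a separate analysis, and in the present paper it cannot be extracted from the trace asymptotics of Theorem \ref{innermult} without circularity, since that theorem is deduced \emph{from} the Greene--Richter--Sundberg result).
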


\begin{thm}\label{innermult}
Let $\varphi \in {\rm Mult} (H_{s} \otimes \cE , H_{s} \otimes {\rm Ran}\Delta_{\bfT})$ be an inner multiplier. Then
$${\rm trace} (d \Psi(M_{\varphi} M_{\varphi}^{*})) = fd({\rm Ran} M_{\varphi}) = \lim\limits_{n \rightarrow \infty} \frac{{\rm trace} (M_{\varphi} M_{\varphi}^{*}P_{n})}{q_{d}(n)}.$$
\end{thm}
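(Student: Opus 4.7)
The statement consists of two equalities bridged by $fd({\rm Ran} M_\varphi)$, and I will prove each in turn; all the substance sits in the second.

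\emph{First equality.} My plan is to apply Theorem \ref{formulae} to rewrite
$${\rm trace}(d\Psi(M_\varphi M_\varphi^*)) = \int_{\partial \mathbb B_d} {\rm trace}\bigl(\tilde\varphi(\bm z)\tilde\varphi(\bm z)^*\bigr)\, d\sigma(\bm z),$$
and then identify the integrand pointwise. Since $\varphi$ is inner, Theorem \ref{GRS} of Greene--Richter--Sundberg asserts that $\tilde\varphi(\bm z)$ is a partial isometry of rank $fd({\rm Ran} M_\varphi)$ for $\sigma$-a.e.\ $\bm z$. Hence $\tilde\varphi(\bm z)\tilde\varphi(\bm z)^*$ is an orthogonal projection whose trace equals $fd({\rm Ran} M_\varphi)$ almost everywhere, and since $\sigma$ is normalized the integral collapses to $fd({\rm Ran} M_\varphi)$. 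This yields ${\rm trace}(d\Psi(M_\varphi M_\varphi^*)) = fd({\rm Ran} M_\varphi)$.

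\emph{Second equality, lower bound.} Substituting the first equality into Lemma \ref{liminf} gives at once
$$\liminf_{n \to \infty} \frac{{\rm trace}(M_\varphi M_\varphi^* P_n)}{q_d(n)} \geq fd({\rm Ran} M_\varphi).$$

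\emph{Second equality, upper bound (main obstacle).} What remains, and what I expect to be the hard part, is the matching inequality $\limsup_n \frac{{\rm trace}(M_\varphi M_\varphi^* P_n)}{q_d(n)} \leq fd({\rm Ran} M_\varphi)$. My plan is first to transfer the question from $P_n$- to $E_n$-averages by invoking the $\limsup$-counterpart of the Cesaro comparison of Lemma 3.22 of \cite{curv} (whose $\liminf$-counterpart powers the first inequality in Lemma \ref{liminf}), thereby reducing the task to showing $\limsup_n r_n \leq fd({\rm Ran} M_\varphi)$ where $r_n \bydef \frac{{\rm trace}(M_\varphi M_\varphi^* E_n)}{q_{d-1}(n)}$. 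Three ingredients are available: (i) the uniform bound $r_n \leq \dim {\rm Ran}\Delta_{\bfT}$, coming from the projection inequality $M_\varphi M_\varphi^* \leq I$; (ii) the weighted Cesaro limit $\sum_{i=0}^n w_{i,n} r_i \to fd({\rm Ran} M_\varphi)$, obtained by combining Theorem \ref{formulae}, Lemma \ref{w}, and the first equality; and (iii) the slow variation $a_n/a_{n+1} \to 1$ from Definition \ref{def}(2).

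Extracting $\limsup_n r_n \leq fd({\rm Ran} M_\varphi)$ from (i)--(iii) is Tauberian in nature: Cesaro data alone cannot distinguish a constant profile from an oscillatory one. The inner-multiplier hypothesis is precisely the input that should resolve this. Concretely, I would exploit the invariance of ${\rm Ran} M_\varphi$ under the coordinate multiplications $M_{z_i}\otimes I$ together with the partial-isometry structure $M_\varphi^* M_\varphi = P_{(\ker M_\varphi)^\perp}$ to express ${\rm trace}(M_\varphi M_\varphi^* E_n)$ in terms of the Taylor coefficients of $\varphi$ and derive a shift-compatibility or monotonicity relation between consecutive $r_n$. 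Combined with the slow variation in (iii) and the Cesaro information in (ii), this should upgrade the Cesaro average to a true limit equal to $fd({\rm Ran} M_\varphi)$. This monotonicity/Tauberian step, tying the algebraic structure of the invariant subspace ${\rm Ran} M_\varphi$ to the asymptotic data in (ii), is where I expect the bulk of the technical work.
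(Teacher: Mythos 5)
Your first equality (via Theorem \ref{formulae} and the Greene--Richter--Sundberg theorem) and your lower bound (via Lemma \ref{liminf}) are exactly the paper's argument and are correct. The gap is in the upper bound, which you correctly identify as the main obstacle but do not actually close. Your plan is Tauberian: reduce to the $E_n$-averages $r_n$, and then hope that the inner-multiplier structure yields a ``shift-compatibility or monotonicity relation between consecutive $r_n$'' strong enough to upgrade the weighted Cesaro limit to a genuine limit. No concrete mechanism for this is given, and none is readily available --- the weights $w_{i,n}$, the uniform bound on $r_n$, and the slow variation $a_n/a_{n+1}\to 1$ together still admit oscillatory profiles; this is precisely why the paper can only prove the $\liminf$ half of the conjecture for general multipliers. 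As written, the proposal proves the lower bound and asserts that the upper bound ``should'' follow.

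The paper's upper bound comes from an entirely different, and much more elementary, source: since $\varphi$ is inner, $\cM = {\rm Ran}\, M_{\varphi}$ is a closed subspace invariant under each $M_{z_i}\otimes I$, and Fang's fibre-dimension formula (Lemma 4 of \cite{Fang-MRL}) gives
$$fd(\cM) = \lim_{n\to\infty} \frac{\dim(P_n \cM)}{q_d(n)}.$$
Combining this with the pointwise inequality $\dim(P_n\cM) \geq {\rm trace}(P_{\cM}P_n) = {\rm trace}(M_{\varphi}M_{\varphi}^*P_n)$ (which holds because $P_nP_{\cM}$ has rank at most $\dim(P_n\cM)$ and is a contraction with nonnegative trace) immediately yields
$$\limsup_{n\to\infty} \frac{{\rm trace}(M_{\varphi}M_{\varphi}^*P_n)}{q_d(n)} \leq fd({\rm Ran}\, M_{\varphi}),$$
with no Tauberian analysis needed. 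This is the missing ingredient: the second use of the fibre dimension, not as the value of the integral but as an independently computable limit of normalized dimensions of an invariant subspace, is what sandwiches the $P_n$-averages from above. You should replace the speculative Tauberian step with this argument.
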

\begin{proof}
Using Theorem \ref{formulae} we write
$${\rm trace} (d \Psi (M_{\varphi} M_{\varphi}^{*})) = \int\limits_{\partial \mathbb{B}_{d}} {\rm trace} (\tilde{\varphi} (\bm z) \tilde{\varphi}(\bm z)^{*}) d \sigma(\bm z).$$
Since $\varphi$ is inner, we apply Theorem \ref{GRS} to get that
$$ fd({\rm Ran} M_{\varphi}) = \int\limits_{\partial \mathbb{B}_{d}} {\rm rank}(\tilde{\varphi} (\bm z)) d \sigma(\bm z) =  \int\limits_{\partial \mathbb{B}_{d}} {\rm trace} (\tilde{\varphi} (\bm z) \tilde{\varphi}(\bm z)^{*}) d \sigma(\bm z).$$
Note that ${\rm Ran}M_{\varphi}$ is a closed subspace of $H_{s} \otimes {\rm Ran} \Delta_{\bfT}$ which is invariant under $M_{z_{i}} \otimes I$ for all $i=1,\dots,d.$ By Lemma 4 in \cite{Fang-MRL}, we have
$$ fd ({\rm Ran} M_{\varphi}) = \lim\limits_{n \rightarrow \infty} \frac{{\rm dim} (P_{n} {\rm Ran} M_{\varphi})}{q_{d}(n)}.$$
It is easy to see that for any $n \geq 0,$
$${\rm dim}(P_{n} {\rm Ran} M_{\varphi}) \geq | {\rm trace} (P_{n}P_{\cM})|.$$
By Corollary \ref{trace XE_n},  ${\rm trace} (P_{n} P_{\cM}) = {\rm trace}(P_{\cM} P_{n}) $ is positive. Thus, for any $n \geq 0,$
$${\rm dim}(P_{n} {\rm Ran}M_{\varphi}) \geq {\rm trace} (P_{{\rm Ran} M_{\varphi}} P_{n}) = {\rm trace} (M_{\varphi} M_{\varphi}^{*} P_{n}).$$
This implies that
$$\limsup\limits_{n \rightarrow \infty} \frac{{\rm trace} (M_{\varphi} M_{\varphi}^{*} P_{n})}{q_{d}(n)} \leq fd({\rm Ran M_{\varphi}}) = {\rm trace}(d \Psi(M_{\varphi} M_{\varphi}^{*}))$$
Now using Lemma \ref{liminf} we conclude that
$${\rm trace}(d \Psi(M_{\varphi} M_{\varphi}^{*})) = fd({\rm Ran M_{\varphi}}) = \lim\limits_{n \rightarrow \infty} \frac{{\rm trace} (M_{\varphi} M_{\varphi}^{*} P_{n})}{q_{d}(n)} .$$
\end{proof}

Let $\cM \subseteq H_{s} \otimes \mathbb{C}^{N}$ be closed subspace which is invariant under each $M_{z_{i}}.$ Then by a result of McCullough--Trent \cite{MT}, there exists a Hilbert space $\cE$ and an inner multiplier $\varphi \in {\rm Mult} (H_{s} \otimes \cE, H_{s} \otimes \mathbb{C}^{N})$ such that ${\rm Ran} M_{\varphi} = \cM.$ Now by using Theorem \ref{innermult}, we get that
$$fd(\cM) = \lim\limits_{n \rightarrow \infty} \frac{{\rm trace} (P_{\cM} P_{n})}{q_{d}(n)}.$$
when $s$ is the Dirichlet kernel, this result appeared in \cite{Fang-Crelle}.

\medskip

It is well-known that for a  $1/s$-contraction $\bfT = (T_{1}, \hdots,T_{d})$, the series
$$\sum\limits_{\alpha\in\mathbb{Z}^{d}_{+}} a_{\alpha} \bfT^{\alpha} \Delta_{\bfT}^{2} (\bfT^{\alpha})^{*}$$
converges in the strong operator topology and the limiting operator is a positive contraction, see Lemma 4.1 in \cite{BJ}. If the series
$$\sum\limits_{\alpha\in\mathbb{Z}^{d}_{+}} a_{\alpha} \bfT^{\alpha} \Delta_{\bfT}^{2}(\bfT^{\alpha})^{*}$$ converges strongly to $I$,
the $1/s-$contraction $\bfT$ is called $pure$. When $s$ is the Szeg\H{o} kernel  (respectively when $s$ is the Drury-Arveson kernel), this notion of  pureness is exactly the same as that of a $C_{\cdot 0}$ contraction (respectively that of a pure $d$-contraction). It is known that $\theta_{\bfT}$ is an inner multiplier if $\bfT$ is {\em pure}.

\medskip

Finally, we have the asymptotic formula for the curvature invariant. The proof is omitted because it follows from the results obtained above.
\begin{thm}\label{thm1}
Let $\bfT = (T_{1}, \dots,T_{d})$ be a $1/s-$contraction.  Then
\begin{align*} K_{\bfT} & =  {\rm dim} ({\rm Ran}\Delta_{\bfT}) - {\rm trace}(d \Psi (M_{\theta_{\bfT}} M_{\theta_{\bfT}}^{*})) \\
& = {\rm dim} ({\rm Ran}\Delta_{\bfT}) - \lim\limits_{n\rightarrow \infty} \sum\limits_{i=0}^{n} w_{i,n} \frac{{\rm trace} (M_{\theta_{\bfT}} M_{\theta_{\bfT}}^{*} E_{i} )}{q_{d-1}(i)} .\end{align*}
Moreover, if $\bfT$ is a pure $1/s-$contraction, then $K_{\bfT}$ is an integer and 
$$ K_{\bfT} = {\rm dim} ({\rm Ran}\Delta_{\bfT}) - \lim\limits_{n\rightarrow \infty}  \frac{{\rm trace} (M_{\theta_{\bfT}} M_{\theta_{\bfT}}^{*} P_{n} )}{q_{d}(n)} 
= {\rm dim} ({\rm Ran}\Delta_{\bfT}) -  fd({\rm Ran} M_{\theta_{\bfT}}).$$
Furthermore, if $\bfT$ is a $1/s-$contraction such that its characteristic function $\theta_{\bfT}$ is a polynomial, then 
$$ K_{\bfT}  = {\rm dim} ({\rm Ran}\Delta_{\bfT}) - \lim\limits_{n\rightarrow \infty} \frac{{\rm trace} (M_{\theta_{\bfT}} M_{\theta_{\bfT}}^{*} E_{i} )}{q_{d-1}(i)}  
 = {\rm dim} ({\rm Ran}\Delta_{\bfT}) - \lim\limits_{n\rightarrow \infty}  \frac{{\rm trace} (M_{\theta_{\bfT}} M_{\theta_{\bfT}}^{*} P_{n} )}{q_{d}(n)}.$$
\end{thm}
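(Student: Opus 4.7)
The plan is to assemble this theorem by applying the three main results of the previous section to the specific multiplier $\varphi = \theta_{\bfT}$, whose radial boundary values control the curvature. The starting observation is that the definition
$$K_{\bfT} = {\rm dim}({\rm Ran}\Delta_{\bfT}) - \int_{\partial \mathbb{B}_d} {\rm trace}\bigl(\tilde{\theta_{\bfT}}(\bm z)\tilde{\theta_{\bfT}}(\bm z)^{*}\bigr)\, d\sigma(\bm z)$$
shows that the entire burden is to produce asymptotic formulae for the integral on the right. Since $\theta_{\bfT} \in {\rm Mult}(H_s \otimes \cD_{\tilde{\bfT}}, H_s \otimes \overline{\rm Ran}\Delta_{\bfT})$ and $\overline{\rm Ran}\Delta_{\bfT} = {\rm Ran}\Delta_{\bfT}$ by the finite-dimensionality assumption, Theorem \ref{formulae} applies directly with $\varphi = \theta_{\bfT}$ and $\cE = \cD_{\tilde{\bfT}}$, yielding both the trace-class identity and the $w_{i,n}$-weighted asymptotic formula. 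Substituting into the definition of $K_{\bfT}$ gives the first two displayed equalities of the theorem.

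For the case when $\bfT$ is pure, the plan is to invoke the stated fact that $\theta_{\bfT}$ is then an inner multiplier, so that Theorem \ref{innermult} applies to $\varphi = \theta_{\bfT}$. This directly identifies ${\rm trace}(d\Psi(M_{\theta_{\bfT}} M_{\theta_{\bfT}}^*))$ with both $fd({\rm Ran}\, M_{\theta_{\bfT}})$ and the $P_n/q_d(n)$ limit, and subtracting from ${\rm dim}({\rm Ran}\Delta_{\bfT})$ yields the desired pair of formulae. The integrality of $K_{\bfT}$ is then a free consequence, since both ${\rm dim}({\rm Ran}\Delta_{\bfT})$ and the fibre dimension $fd({\rm Ran}\, M_{\theta_{\bfT}})$ are integers.

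For the polynomial case, one simply applies the unnumbered theorem preceding the inner-multiplier discussion (the one asserting that if $\varphi$ is a polynomial multiplier then $\liminf$ is $\lim$ and the three asymptotic quantities coincide) to $\varphi = \theta_{\bfT}$. This gives both of the claimed limits at once, after subtracting from ${\rm dim}({\rm Ran}\Delta_{\bfT})$.

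There is no real obstacle here; the proof is a packaging exercise. The only small point worth verifying is the clean identification of the codomain of $\theta_{\bfT}$ with $H_s \otimes {\rm Ran}\Delta_{\bfT}$ (rather than its closure) so that Theorems \ref{formulae} and \ref{innermult} apply verbatim — and this is precisely what the standing finite-dimensionality assumption on ${\rm Ran}\Delta_{\bfT}$ ensures. That is presumably why the authors elect to omit the proof.
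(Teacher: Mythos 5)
Your proposal is correct and matches the paper's intent exactly: the authors omit the proof precisely because it is the packaging exercise you describe, namely applying Theorem \ref{formulae}, Theorem \ref{innermult} (via innerness of $\theta_{\bfT}$ for pure $\bfT$), and the polynomial-multiplier theorem to $\varphi = \theta_{\bfT}$ and subtracting from ${\rm dim}({\rm Ran}\Delta_{\bfT})$. Your observation that integrality in the pure case follows freely from $fd({\rm Ran}\, M_{\theta_{\bfT}})$ being an integer is also the intended argument.
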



Acknowledgments.
The first author is supported by a J C Bose Fellowship JCB/2021/000041 of SERB. The second author is supported by the Prime Minister’s Research Fellowship PM/MHRD-20-15227.03. This work is also supported by the DST FIST program - 2021 [TPN - 700661].

\end{document}